\documentclass[10pt, reqno]{amsart}
\usepackage{graphicx, amssymb, amsmath, amsthm}
\usepackage{comment}%%将一大段话变成zhushi
\usepackage{epsfig}  
\usepackage{xcolor}
\usepackage{enumerate}
\usepackage{hyperref}
\usepackage{color}

\numberwithin{equation}{section}

\let\Re=\undefined\DeclareMathOperator*{\Re}{Re}
\let\Im=\undefined\DeclareMathOperator*{\Im}{Im}

\def\ov{\overline}

\newcommand{\pa}{\parallel}
\newcommand{\pe}{\perp}

\newtheorem{theorem}{Theorem}[section]
\newtheorem{lemma}[theorem]{Lemma}
\newtheorem{corollary}[theorem]{Corollary}
\newtheorem{proposition}[theorem]{Proposition}
\newtheorem{definition}[theorem]{Definition}
\newtheorem{remark}[theorem]{Remark}

\begin{document}

\title[Stabilization of a Hyperbolic Stokes type system]{On the Stabilization of a Hyperbolic Stokes system Under Geometric Control Condition}

\author[F. W. Chaves-Silva]{Felipe W. Chaves-Silva$^*$}
\address{Departament of Mathematics, Federal University of Para\'iba, CEP: 58051-900, Jo\~ao Pessoa, PB, Brazil}
\email{fchaves@mat.ufpb.br}
\author[C. Sun]{Chenmin Sun$^*$}
\address{CY Cergy-Paris Universit\'e, Laboratoire de math\'ematique AGM, UMR CNRS 8088, Cergy-Pontoise, France}
\email{chenmin.sun@u-cergy.fr}
\thanks{$^*$F. W. Chaves-Silva and Chenmin Sun were supported  by  the ERC project number 320845: Semi Classical Analysis of  Partial Differential Equations, ERC-2012-ADG}
\maketitle

\begin{abstract}
	In this article, we study the stabilization problem for a hyperbolic type Stokes system posed on a bounded domain. We show that when the damping effects are restricted to  a subdomain satisfying the geometric control condition, the energy of the system decays exponentially. The result is a consequence of a new quasi-mode estimate for the Stokes system.
\end{abstract}
%%%%%%%%%%%%%%%%%%%%%%%%%%%%%%%%%%%%%%%%%%%%%%%%%%%%%%%%%%%%%%%%%%%%%%%%%%%%%
%% introduction
%%%%%%%%%%%%%%%%%%%%%%%%%%%%%%%%%%%%%%%%%%%%%%%%%%%%%%%%%%%%%%%%%%%%%%%%%%%%%
\section{Introduction and Main Results}

Let $\Omega \subset \mathbb{R}^d$ ($d\geq 2$) be a bounded connected open set whose boundary $\partial \Omega$ is regular enough, $\omega$  be a small subset of $\Omega$ and let $T > 0$. In this article, we are interested in the stabilization problem for the following hyperbolic Stokes system:

\begin{equation}\label{WStokesmain}
\left \{   
\begin{array}{ll}
\partial^2_tu-\Delta u+\nabla p+a(x)\partial_t u=0 &  \mbox{in}  \   \mathbb{R}\times \Omega,  \\
\mathrm{div } \ u  = 0 & \mbox{in} \  \mathbb{R}\times \Omega, \\
u = 0 & \mbox{on} \  \mathbb{R}\times \partial \Omega , \\
(u(0, x),\partial_tu(0, x))=(u_0,v_0)\in V\times H,
\end{array}
\right. 
\end{equation}
where $V$ and $H$ are the usual spaces in the context of fluid mechanics:

 $$V=\{u\in H_{0}^1(\Omega)^d: \mathrm{div } \ u=0\}$$ and
$$H=\{u\in L^2(\Omega)^d: \mathrm{div } \ u =0 ,u\cdot\mathbb{\nu}|_{\partial\Omega}=0\},
$$
and $\nu(x)$ is the outward normal to $\Omega$ at the point $x \in \partial \Omega$. In \eqref{WStokesmain}, the  damping term $a\in L^{\infty}(\Omega)$ and satisfies $a(x)\geq 0$, for all $x\in \Omega$.

If $u= u(x,t)$ is a (sufficiently smooth) solution of the system,  we define its energy as
$$ E[u](t)=\frac{1}{2}\int_{\Omega}(|\partial_t u(t,x)|^2+|\nabla u(t,x|^2)dx, \ \ \ \forall t\in \mathbb{R},
$$
and when there is  no damping, namely $a\equiv 0$, the energy is conserved, while in general we only have that $E[u](t)$ is non-increasing: 
$$ \frac{dE[u]}{dt}=
-\int_{\Omega}a(x)|\partial_t u(t,x)|^2dx\leq 0.
$$

As for other hyperbolic systems, the stabilization problem  for \eqref{WStokesmain} concerns the decay rate in time of the energy $E[u](t)$ under appropriate assumptions on the damping term.

It is well-known that stabilization problems are closely related to observability and exact controllability problems in abstract settings. In fact, if we consider the undamped system
\begin{equation}\label{undampedWS}
\left \{   
\begin{array}{ll}
\partial^2_tu-\Delta u+\nabla p=0 &  \mbox{in}  \   \mathbb{R}\times \Omega,  \\
\mathrm{div } \ u  = 0 & \mbox{in} \  \mathbb{R}\times \Omega, \\
u = 0 & \mbox{on} \  \mathbb{R}\times \partial \Omega , \\
(u(0, x),\partial_tu(0, x))=(u_0,v_0)\in V\times H,
\end{array}
\right. 
\end{equation}
we say that \eqref{undampedWS} is observable at time $T$ with observation in $\omega$ if there exists $C>0$ such that 
\begin{equation} \label{observability0}
||u_0||_{V}^2 + ||v_0||^2_{H} \leq C\int_0^T\int_{\omega}|\partial_t u(t,x)|^2dxdt,
\end{equation}
for every $(u_0,v_0)\in V\times H$.

When \eqref{observability0} holds, one can show that for any $(u_0,v_0)\in V\times H$ there exists $f\in L^2((0,T) \times \omega)^d$ such that the solution of 
\begin{equation}\label{ControlWS}
\left \{   
\begin{array}{ll}
\partial^2_t u-\Delta u+\nabla p=f\mathbf{1}_{\omega} &  \mbox{in}  \   \mathbb{R}\times \Omega,  \\
\mathrm{div } \ u  = 0 & \mbox{in} \  \mathbb{R}\times \Omega, \\
u = 0 & \mbox{on} \  \mathbb{R}\times \partial \Omega , \\
(u(0, x),\partial_tu(0, x))=(u_0,v_0)\in V\times H,
\end{array}
\right. 
\end{equation}
satisfies
$$
u(T,x)=0, \ \ \partial_t u(T,x) =0,
$$
that is to say, system \eqref{ControlWS} is exact controllable at $T$ with control localized in $\omega$. Nevertheless, it is important to mention that a complete characterization of the sets $\omega$ for which \eqref{observability0} is true remains  open. A partial answer to this question was given by the first author in \cite{Felipe}.

The motivation for studying the stabilization of system \eqref{WStokesmain} is two-fold. First, system \eqref{undampedWS} is the hyperbolic counterpart of Stokes system, which is the linearized version of the well-known Navier-Stokes equation in fluid mechanics. In fact, if we know that  system \eqref{undampedWS} is exact controllable at some time $T>0$, with control applied to some control region $\omega$,  then the so-called Control Transmutation Method can be applied to obtain the null controllability at any time and the optimal cost of controllability (in time) for the Stokes system (for more details, see \cite{Felipe}). On the other hand, system \eqref{undampedWS} comes from simple models of dynamical elasticity for incompressible materials. More precisely, it can be derived as a limit model of the Lam\'{e} system in the linear elastic theory when one parameter tends to infinity (\cite{JLions}). For the sake of completeness, in Appendix we give a derivation of the system \eqref{undampedWS} from the Lam\'e system. It is important to remark that the stabilization problem for the Lam\'{e} system has been already studied in \cite{BL}.

To state our main result, let us introduce several concepts. Some terminologies and notations will be clear in the next section.

\begin{definition}
	We say that the support of a non-negative function $a\in C(\ov{\Omega})$ satisfies the geometric control condition (GCC in short) if there exists $T>0$, such that each generalized bicharacteristic ray $\gamma(t)$ with speed $1$ issued from a point $\rho\in ^bT^{*}\ov{\Omega}$ enters the set $\{x\in\ov{\Omega}:a(x)>0\}$ in a time $t<T$. 
\end{definition}

We recall that an open set $\Omega$ has no infinite order of contact, if there is no geodesic tangent to $\partial\Omega$ of infinite order. More precisely, in the decomposition
$$ T^*\partial\Omega=\mathcal{E}\cup \mathcal{H}\cup \mathcal{G},
$$
we have 
$$ \mathcal{G}=\bigcup_{j=2}^{\infty}\mathcal{G}^j.
$$
Here, the sets $\mathcal{E},\mathcal{H},\mathcal{G}$ are called the elliptic region, the hyperbolic region and the glancing surface, respectively, and $\mathcal{G}^j$ is the set of points in $\mathcal{G}$ with $j-$th order of contact. The precise definition will be given in the next section.

The first main result in this article is as follows: 
\begin{theorem}\label{main}
	Suppose that $\Omega\subset \mathbb{R}^d$ is a bounded open set with no infinite order of contact and $a\in C(\ov{\Omega})$ is a non-negative function whose  support satisfies the geometric control condition. Then, there exist positive constants $C_0$ and $\alpha$ such that for any $(u_0,v_0)\in V\times H$, the energy of the corresponding solution $u(t)$ to \eqref{WStokesmain} decays exponentially:
	\begin{equation}
	\label{stab}
	E[u](t)\leq C_0E[u](0)e^{-\alpha t}
	\end{equation}
	for all $t\geq 0$.
\end{theorem}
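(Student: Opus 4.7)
The plan is to recast system \eqref{WStokesmain} as a first-order evolution $\partial_t U=\mathcal{A}U$ on the Hilbert space $\mathcal{H}:=V\times H$, where $U=(u,\partial_t u)$ and $\mathcal{A}(u,v)=(v,\mathbb{P}(\Delta u-a\,v))$ with $\mathbb{P}$ the Leray projector onto $H$. The dissipation identity $\mathrm{Re}\,\langle\mathcal{A}U,U\rangle_{\mathcal{H}}=-\int_{\Omega}a|v|^2\,dx\le 0$ shows that $\mathcal{A}$ generates a contraction semigroup on $\mathcal{H}$, and I would then invoke the Huang--Pr\"uss (Gearhart) theorem. This reduces the exponential decay \eqref{stab} to the two conditions $i\mathbb{R}\subset\rho(\mathcal{A})$ and $\sup_{\lambda\in\mathbb{R}}\|(i\lambda-\mathcal{A})^{-1}\|_{\mathcal{H}\to\mathcal{H}}<\infty$.

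The first condition (and the low-frequency portion of the second) would be handled by excluding eigenvalues on $i\mathbb{R}$: an eigenfunction $(u,i\lambda u)$ of $\mathcal{A}$ with eigenvalue $i\lambda\ne 0$ must satisfy $\lambda^2 a|u|^2\equiv 0$, so $u\equiv 0$ on $\{a>0\}$, and a unique continuation principle for stationary Stokes eigenfunctions (as in \cite{Felipe}) then forces $u\equiv 0$. Since the embedding $V\hookrightarrow H$ is compact, $\mathcal{A}$ has compact resolvent on $\mathcal{H}$, so its spectrum is purely discrete and this argument simultaneously yields $i\mathbb{R}\subset\rho(\mathcal{A})$ and a uniform resolvent bound over every compact subset of $i\mathbb{R}$.

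The heart of the proof is the high-frequency bound, which I would establish by contradiction. Assume there exist sequences $(\lambda_n,U_n)$ with $|\lambda_n|\to\infty$, $\|U_n\|_{\mathcal{H}}=1$, and $(i\lambda_n-\mathcal{A})U_n\to 0$ in $\mathcal{H}$. Writing $U_n=(u_n,v_n)$, this gives the quasi-mode
\begin{equation*}
\lambda_n^2 u_n+\Delta u_n-\nabla p_n=i\lambda_n a(x)u_n+o_{L^2}(\lambda_n),\quad \mathrm{div}\,u_n=0,\quad u_n|_{\partial\Omega}=0,
\end{equation*}
with $\|\nabla u_n\|_{L^2}+\lambda_n\|u_n\|_{L^2}\sim 1$. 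Pairing the first equation with $u_n$ and using the dissipation yields $\sqrt{a}\,\lambda_n u_n\to 0$ in $L^2$, so any semiclassical defect (Wigner) measure $\mu$ of $u_n$ at scale $h_n=|\lambda_n|^{-1}$ vanishes over $\{a>0\}$. The ``new quasi-mode estimate'' announced in the abstract is then the statement that $\mu$ nevertheless propagates along generalized bicharacteristics of the symbol $|\xi|^2-1$ inside ${}^bT^*\overline{\Omega}$; the geometric control condition forces $\mu\equiv 0$, contradicting the normalization $\|U_n\|_{\mathcal{H}}=1$.

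The principal obstacle is precisely this propagation. Unlike for the scalar wave equation, the Stokes system is coupled through the pressure $p_n$, a nonlocal Lagrange multiplier enforcing $\mathrm{div}\,u_n=0$. Taking the divergence of the quasi-mode equation I obtain an elliptic problem $\Delta p_n=\mathrm{div}(i\lambda_n a u_n)+o_{L^2}(\lambda_n)$ together with a Neumann-type boundary condition inherited from $u_n|_{\partial\Omega}=0$ and $\mathrm{div}\,u_n=0$; this should allow me to estimate $\nabla p_n$ at the correct semiclassical scale and show that its contribution to $\mu$ is concentrated on $\{a>0\}$, where $\mu$ already vanishes. Away from the boundary a positive commutator argument then yields transport of $\mu$ along straight rays. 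At the boundary the analysis is delicate at glancing points, and the hypothesis $\mathcal{G}=\bigcup_{j\ge 2}\mathcal{G}^j$ of no infinite order of contact is exactly what lets me apply the Melrose--Sj\"ostrand propagation theorem to continue $\mu$ through $\mathcal{G}$. Combining the full propagation statement with GCC closes the contradiction and yields Theorem~\ref{main}.
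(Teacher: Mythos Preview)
Your overall architecture matches the paper's: recast \eqref{WStokesmain} as $\partial_t U=\mathcal{A}U$ on $V\times H$, reduce exponential decay to a uniform resolvent bound on $i\mathbb{R}$ via Gearhart, dispose of low/bounded frequencies by compactness plus unique continuation for Stokes eigenfunctions (this is the paper's Lemma~\ref{uniquecontinuation}), and attack high frequencies by contradiction using a semiclassical defect measure for the quasi-mode sequence together with propagation and GCC. The paper packages the high-frequency step as the quasi-mode estimate \eqref{semi-ob} in Proposition~\ref{semi-obser}, and then proves that estimate by the measure/propagation argument you outline.

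The genuine gap is in your handling of the pressure, which is precisely the technical heart of the paper. Two points. First, your assertion that the contribution of $\nabla p_n$ to $\mu$ is ``concentrated on $\{a>0\}$'' is not correct: once you put the right-hand side in $H$ (as the paper does), the pressure is \emph{harmonic} in $\Omega$, hence globally nonlocal, and its effect on the measure is a boundary phenomenon, not something supported over the damping region. The relevant facts are rather that $h_n\nabla q_n\rightharpoonup 0$ and $u_n\rightharpoonup 0$ weakly (Lemma~\ref{press.norm}), which suffice for interior propagation (Proposition~\ref{interior propagation}). Second, and more seriously, you cannot simply ``apply the Melrose--Sj\"ostrand propagation theorem'' at the boundary: that theorem is for scalar operators, whereas here the harmonic pressure couples to the Dirichlet trace and produces a genuine boundary interaction that the classical argument does not cover. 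The paper explicitly flags this as the main difficulty and imports a propagation result proved separately for the Stokes quasi-mode system (Theorem~\ref{St}, from \cite{Sun}), where the interplay between wave-like propagation and the pressure trace at hyperbolic and glancing points is analyzed. Without that result---or an equivalent substitute---your argument does not close at $\partial\Omega$, and the contradiction step fails.
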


In what follows, we say that the stabilization of \eqref{WStokesmain} holds if \eqref{stab} holds true.

\begin{remark}
	As a byproduct of the proof of Theorem \ref{main}, we obtain the null (exact) controllability at some time $T$ of  system \eqref{ControlWS}. Namely, there exist $T>0$ and a control $f\in L^2([0,T]\times\omega)$ such that the corresponding solution $u$ to \eqref{ControlWS} satisfies $(u(T),\partial_tu(T))=(0,0).$ However, we do not describe the control time $T$ explicitly, since we prove the observability inequality \eqref{observability0} by reducing it to a quasi-mode estimate. 
\end{remark}

Let us mention that if $a$ is supported in a neighborhood of the boundary $\partial\Omega$, the same result is true by adapting the strategy in \cite{Felipe}, where the author has proved the exact controllability of the system \eqref{ControlWS} with $\omega$ be a neighborhood of $\partial\Omega$. Our result is a generalization of the result in \cite{Felipe}.

%%% Add several sentences
The pioneering work of J.~Rauch and M.~Taylor \cite{RT} related the exponential decay of damped wave equations to the geometric control condition (GCC) of damped region on compact Riemannian manifold without boundary. Until the celebrated work of  Bardos-Lebeau-Rauch \cite{BLR}, the presence of the boundary has been understood and the exactly controllability for wave equations as well as the exponential stabilization are obtained under (GCC). The proof mainly relies on the propagation of singularities along the Melrose-Sj\"{o}strand flow. Later on, the tool of micro-local defect measure, introduced by G\'{e}rard and 
Tartar independently, has been used to simplify the proof of these results and to adapt to many other problems, see for example \cite{BL} for the Lam\'{e} systems and \cite{DRL} for a coupled wave system.
The key ingredient of the measure-based proof is the propagation formula, which can be viewed as a transport equation for the defect measure. This is a more quantitative version of the classical propagation of singularities, which usually describes the invariance of the wave front set along the Melrose-Sj\"ostrand flow. 

For the present system \eqref{WStokesmain}, the presence of the pressure term $\nabla p$ introduces nontrivial difficulties if we want to adapt the strategy in \cite{BL} directly, due to the rough regularity of time-dependent harmonic function $p(t,x)$. However, following \cite{Ge78}, the exponentially stabilization of \eqref{WStokesmain} can be reduced to the following semi-classical version of the observation inequality:
\begin{proposition}\label{semi-obser}
	Assume that $a\in L^{\infty}(\Omega)\cap C^0(\ov{\Omega})$ and $\int_{\Omega}adx>0$.  Then the stabilization of \eqref{WStokesmain} holds if the following statement is true:
	$$ \exists h_0>0,C>0 \textrm{ such that for all } 0<h<h_0, \text{ if } (u,q,f)\in H^2(\Omega)\cap V\times H^1(\Omega)\times H
	$$
	solves the equation
	\begin{equation}\label{quasimode}
	-h^2\Delta u-u+h\nabla q =f,
	\end{equation}
	then the inequality 
	\begin{equation}
	\label{semi-ob}
	\|u\|_{L^2(\Omega)}\leq C\Big(\|a^{1/2}u\|_{L^2(\Omega)}+\frac{1}{h}\|f\|_{L^2(\Omega)}\Big).
	\end{equation}
	holds.

\end{proposition}

Note that the system \eqref{semi-ob} is just a quasi-mode equation of the stationary Stokes system. In particular, if $f=0$, the solution $u(h)$ is a eigenfunction of the Stokes operator corresponding to the eigenvalues $h^{-2}$. 

The proof of \eqref{semi-ob} is based on the propagation of semi-classical measure $\mu$ in the recent work \cite{Sun} of the second author. We give a brief recall here. The sequence of pressures $q(h)$ are harmonic, and their impact on the solution only occurs at the boundary. It has been shown that the measure is propagated along bicharacteristic rays which is invariant under the Melrose-Sj\"ostrand flow. When a ray reaches the boundary, careful analysis between the wave-like propagation phenomenon and the impact of the pressure allows us to deduce the propagation of the support of the measure $\mu$ along generalized bicharacteristic rays defined in \cite{MS}.

We organize this article as follows. In Section 2, we give some notations, definitions and classical results. In Section 3, we follow the strategy in \cite{BG} to reduce the stabilization to the semi-classical observability \eqref{semi-ob}. In Section 4, we prove the semi-classical observability by adapting the propagation result. Finally in Appendix, we give the derivation from the Lam\'{e} system to the hyperbolic Stokes system \eqref{WStokesmain}.

\section{Preliminary}
\subsection{Notations}
For a manifold $M$, we let $TM$ be its tangent bundle and $T^*M$ be the cotangent bundle with canonical projection
$$ \pi:TM( \textrm{ or }T^*M)\rightarrow M.
$$

In the tubular neighborhood of boundary, we can identify $\Omega$ locally as $[0,\epsilon_0)\times X$, $X=\{x'\in\mathbb{R}^{d-1}:|x'|<1\}$. For $x\in\ov{\Omega}$, we note $x=(y,x')$, where $y\in [0,\epsilon_0),x'\in X$, and $x\in\partial\Omega$ if and only if $x=(0,x')$. In this coordinate system, the Euclidean metric $dx^2$ can be written as matrices
\begin{equation*}
g=\left(
\begin{array}{cc}
1 & 0 \\
0 & M(y,x') \\
\end{array}
\right),
g^{-1}=\left(
\begin{array}{cc}
1 & 0 \\
0 & \alpha(y,x') \\
\end{array}
\right),
\end{equation*}
with $|\xi'|_{\alpha(y,x'))}^2=\langle\xi',\alpha(y,x')\xi'\rangle_{\mathbb{C}^{d-1}}$ be the induced metric on $T^*\partial\Omega$, parametrized by $y$. Note that
$|\xi'|_{\alpha(0,x')}^2=\langle\xi',\alpha(0,x')\xi'\rangle_{\mathbb{C}^{d-1}}$ is the natural norm on $T^*\partial\Omega$, dual of the norm on $T\partial\Omega$, induced by the canonical metric on $\ov{\Omega}$. Write $(x,\xi)=(y,x',\eta,\xi')$ and denote by $|\xi|$ the Euclidean norm on $T^*\mathbb{R}^d$.

We define the $L^2$ norms and inner product on $[0,\epsilon_0)\times X$ via
$$ \|u\|_{L_{y,x'}^2}^2:=\int_0^{\epsilon_0}\int_X |u|^2d_{g(y,\cdot)}x'dy,
$$
$$ (u|v)_{L_{y,x'}^2}:=(u|v)_{\Omega}:=\int_0^{\epsilon_0}\int_X u\cdot \ov{v}d_{g(y,\cdot)}x'dy,
$$
$$\|u(y,\cdot)\|_{L_{x'}^2}^2:=\int_X|u(y,\cdot)|^2d_{g(y,\cdot)}x',
$$
$$ (u|v)_{L_{x'}^2}(y):=\int_X u(y,\cdot)\cdot\ov{v(y,\cdot)}d_{g(y,\cdot)}x',
$$
where the measure $d_{g(y,\cdot)}x'$ is the induced measure on $X$, parametrized by $y\in[0,\epsilon_0)$ such that $d_{g(y,\cdot)}x'dy=\mathcal{L}(dx)$, the Lebesgue measure on $\mathbb{R}^d$. Note that the measure $d_{g(0,\cdot)}x'$ is nothing but the surface measure on $\partial\Omega$. In certain situations we perform using global notation for inner product:
\begin{gather*}
(u|v)_{\Omega}:=\int_{\Omega}u\cdot\ov{v}dx,\\
(f|g)_{\partial\Omega}:=\int_{\partial\Omega}f\cdot\ov{g}d\sigma(x)
\end{gather*}
In the tubular neighborhood, we can write a vector field $X=(X_{\pa},X_{\pe})$, where $X_{\pa}$ stands for the components parallel to the boundary while $X_{\pe}$ stands for the normal component with the following convention: $(0,a)=-a\nu$. 

As in \cite{FL}, we will write down system \eqref{WStokesmain} in the tubular neighborhood. For
$u=(u_{\pa},u_{\pe})$, equation \eqref{WStokesmain} can be rewritten:
\begin{equation}
\left\{
\begin{aligned}
& (-h^2\Delta_{\pa}-1)u_{\pa}+h\nabla_{x'}q=f_{\pa},\\
& (-h^2\Delta_{g}-1)u_{\pe}+h\partial_y q=f_{\pe},\\
&h\textrm{ div }_{\pa}u_{\pa}+\frac{h}{\sqrt{\det g}}\partial_y(\sqrt{\mathrm{det }g} u_{\pe})=0
\end{aligned}
\right.
\end{equation}
where
$$ h^2\Delta_{\pa}=h^2\partial_y^2-\Lambda^2(y,x',hD_{x'})+hM_{\pa}(y,x',hD_x')+hM_1(y,x')h\partial_y,
$$
$$ h^2\Delta_{g}=h^2\partial_y^2-\Lambda^2(y,x',hD_{x'})+hM_{\pe}(y,x',hD_x')+
hN_1(y,x')h\partial_y,
$$
$$ h\textrm{ div }_{\pa}u_{\pa}=\frac{h}{\sqrt{\det g}}\sum_{j=1}^{N-1}\partial_{x'_j}(\sqrt{\det g}u_{\pa,j}).
$$
Note that $h^2\Lambda^2(y,x',hD_{x'})$ has the symbol $\lambda^2=|\xi'|^2_{g(y,\cdot)}$, $M_{\pa,\pe}$ are both first-order matrix-valued semi-classical differential operators, and $M_1, N_1$ are zero-order matrix-valued functions.

\subsection{Geometric Preliminaries}
Denote by $^bT\ov{\Omega}$ the vector bundle whose sections are the vector fields $X(p)$ on $\ov{\Omega}$ with $X(p)\in T_p\partial\Omega$ if $p\in\partial\Omega$. Moreover, denote by $^bT^*\ov{\Omega}$ the Melrose's compressed cotangent bundle which is the dual bundle of $^bT\ov{\Omega}$. Let 
$$ j:T^*\ov{\Omega}\rightarrow ^bT^*\ov{\Omega}
$$
be the canonical map. In our geodesic coordinate system near $\partial\Omega$, $^bT\ov{\Omega}$ is generated by the vector fields $\frac{\partial}{\partial x'_1},\cdot\cdot\cdot, 
\frac{\partial}{\partial x'_{d-1}},y\frac{\partial}{\partial y}$ and thus $j$ is defined by
$$ j(y,x';\eta,\xi')=(y,x';v=y\eta,\xi').
$$

The principal symbol of operator $P_h=-(h^2\Delta+1)$ is $$p(y,x',\eta,\xi')=\eta^2+|\xi'|_{\alpha(y,x')}^2-1.$$
By Car($P$) we denote the characteristic variety of $p$:
$$\textrm{Car}(P):=\{(x,\xi)\in T^*\mathbb{R}^{d}|_{\ov{\Omega}}:p(x,\xi)=0\},
Z:=j(\textrm{Car}(P)).
$$
By writing in another way
$$p=\eta^2-r(y,x',\xi'), r(y,x',\xi')=1-|\xi'|_{\alpha}^2,
$$
we have the decomposition
$$ T^*\partial\Omega=\mathcal{E}\cup\mathcal{H}\cup\mathcal{G},
$$
according to the value of $r_0:=r|_{y=0}$ where
$$\mathcal{E}=\{r_0<0\},
\mathcal{H}=\{r_0>0\},
\mathcal{G}=\{r_0=0\}.
$$
The sets $\mathcal{E},\mathcal{H},
\mathcal{G}$ are called elliptic, hyperbolic and glancing, with respectively. 

For a symplectic manifold $S$ with local coordinate $(z,\zeta)$, a Hamiltonian vector field associated with a real function $f$ is given by
$$ H_f=\frac{\partial f}{\partial \zeta}\frac{\partial}{\partial z}-\frac{\partial f}{\partial z}\frac{\partial}{\partial\zeta}.
$$
Now for $(x,\xi)\in\Omega$ far away from the boundary, the Hamiltonian vector field associated to the characteristic function $p$ is given by
$$ H_p=2\xi\frac{\partial}{\partial x}.
$$
We call the trajectory of the flow
$$ \phi_s:(x,\xi)\mapsto (x+s\xi,\xi)
$$
bicharacteristic or simply ray, provided that the point $x+s\xi$ is still in the interior.

To classify different scenarios as a ray approaching the boundary, we need more accurate decomposition of the glancing set $\mathcal{G}$. Let $r_1=\partial_yr|_{y=0}$ and define
$$ \mathcal{G}^{k+3}=\{(x',\xi'):r_0(x',\xi')=0,H_{r_0}^j(r_1)=0,\forall j\leq k;H_{r_0}^{k+1}(r_1)\neq 0\}, k\geq 0
$$
$$
\mathcal{G}^{2,\pm}:=\{(x',\xi'):r_0(x',\xi')=0,\pm r_1(x',\xi')>0\},\mathcal{G}^2:=\mathcal{G}^{2,+}\cup\mathcal{G}^{2,-}.
$$
No infinite order of contact means that we can decompose $\mathcal{G}$ into
$$ \mathcal{G}=\bigcup_{j=2}^{\infty}\mathcal{G}^j.
$$

Given a ray $\gamma(s)$ with $\pi(\gamma(0))\in \Omega$ and $\pi(\gamma(s_0))\in\partial\Omega$ be the first point who attaches the boundary.  If $\gamma(s_0)\in\mathcal{H}$, then $\eta_{\pm}(\gamma(s_0))=\pm\sqrt{r_0(\gamma(s_0))}$ be the two different roots of $\eta^2=r_0$ at this point. Notice that the ray starting with direction $\eta_-$ will leave $\Omega$,  while the ray with direction $\eta_+$ will enter the interior of $\Omega$. This motivates the following definition of broken bicharacteristic:
\begin{definition}[\cite{Hor}]
	A broken bicharacteristic arc of $p$ is a map:
	$$ s\in I\setminus B\mapsto \gamma(s)\in T^*\Omega\setminus \{0\},
	$$
	where $I$ is an interval on $\mathbb{R}$ and $B$ is a discrete subset, such that
	\begin{enumerate}
		\item If $J$ is an interval contained in $I\setminus B$, then 
		$ s\in J\mapsto \gamma(s)
		$
		is a bicharacteristic of $P_h$ over $\Omega$.
		\item If $s\in B$, then the limits $\gamma(s^+)$ and $\gamma(s^-)$ exist and belongs to $T_x^*\ov{\Omega}\setminus \{0\}$ for some $x\in\partial\Omega$, and the projections in $T_x^*\partial\Omega\setminus \{0\}$ are the same hyperbolic point.
	\end{enumerate}
\end{definition}
When a ray $\gamma(s)$ arrives at some point $\rho_0\in\mathcal{G}$, there are several situations. If $\rho_0\in\mathcal{G}^{2,+}$, then the ray passes transversally over $\rho_0$ and enters $T^*\Omega$ immediately. If $\rho_0\in\mathcal{G}^{2,-}$ or $\rho_0\in\mathcal{G}^k$ for some $k\geq3$, then we can continue it inside $T^*\partial\Omega$
as long as it can not leave the boundary along the trajectory of the Hamiltonian flow of $H_{-r_0}$. We now give the precise definition.

\begin{definition}[\cite{Hor}]
	A generalized bicharacteristic ray of $p$ is a map:
	$$ s\in I\setminus B\mapsto \gamma(s)\in (T^{*}\ov{\Omega}\setminus T^*\partial\Omega) \cup \mathcal{G}
	$$
	where $I$ is an interval on $\mathbb{R}$ and $B$ is a discrete set of $I$ such that $p\circ \gamma=0$ and the following:
	\begin{enumerate}
		\item $\gamma(s)$ is differentiable and $\frac{d\gamma}{ds}=H_{p}(\gamma(s))$ if $\gamma(s)\in T^*\ov{\Omega}\setminus T^*\partial\Omega $ or $\gamma(s)\in\mathcal{G}^{2,+}$.
		\item Every $s\in B$ is isolated, $\gamma(s)\in T^*\ov{\Omega}\setminus T^*\partial\Omega$ if $s\neq t$ and $|s-t|$ is small enough, the limits $\gamma(s^{\pm})$ exist and are different points in the same fibre of $T^*\partial\Omega$.
		\item $\gamma(s)$ is differentiable and $\frac{d\gamma}{ds}=H_{-r_0}(\gamma(s))$ if $\gamma(s)\in \mathcal{G}\setminus \mathcal{G}^{2,+}$.
	\end{enumerate}
\end{definition}
\begin{remark}
	The definition above does not depend on the choice local coordinate, and in the geodesic coordinate system, the map
	$$ s\mapsto (y(s),\eta^2(s),x'(s),\xi'(s)) 
	$$
	is always continuous and 
	$$ s\mapsto (x'(s),\xi'(s))
	$$
	is always differentiable and satisfies the ordinary differential equations
	$$ \frac{dx'}{dt}=-\frac{\partial r}{\partial \xi'},\frac{d\xi'}{dt}=\frac{\partial r}{\partial x'},
	$$
	the map $s\mapsto y(s)$ is left and right differentiable with derivative $2\eta(s^{\pm})$ for any $s\in B$ (hyperbolic point).
	
	Moreover, there is also the continuous dependence with the initial data, namely the map
	$$ (s,\rho)\mapsto (y(s,\rho),\eta^2(s,\rho),x'(s,\rho),\xi'(s,\rho))
	$$
	is continuous. We denote the flow map by $\gamma(s,\rho)$.
\end{remark}

\begin{remark}
	Under the map $j:T^*\ov{\Omega}\rightarrow ^bT^*\ov{\Omega}$, one could regard $\gamma(s)$ as a continuous flow on the compressed cotangent bundle $^bT^*\ov{\Omega}$, and it is called the Melrose-Sj\"{o}strand flow. We will also call each trajectory generalized bicharacteristic or simply ray in the sequel. 
\end{remark}

It is well-known that if there is no infinite contact in $\mathcal{G}$, a generalized bicharacteristic is uniquely determined by any one of its points. In other words, the Melrose-Sj\"{o}strand flow is globally well-defined. See \cite{Hor} for more discussion.

%%%%%%%%%%%%%%%%%%%%%%%%%%%%%%%%%%%%%%%%%%%%%%%%%%%%%%%%%%%%%%%%%%%%%%%%%%%%%%%%%%%%%%%%%%%%%%%%

\section{Review of Semi classical propagation of singularity}

\subsection{Definition of defect measure}
We follow closely \cite{NB1} here, see also \cite{PG1} for a little different but more comprehensive introduction.

Define the partial symbol class $S^m_{\xi'}$ and the class of boundary $h$-pseudo-differential operators $ \mathcal{A}_h^m$ as follows 
\begin{gather*}
S^m_{\xi'}:=\{a(y,x',\xi'):\sup_{\alpha,\beta,y\in[0,\epsilon_0]}|\partial_{x'}^{\alpha}
\partial_{\xi'}^{\beta}a(y,x',\xi')|\leq C_{m,\alpha,\beta}(1+|\xi'|)^{m-\beta}\}.\\
\mathcal{A}_h^m=:\textrm{Op}_h^{comp}(S^m)+\textrm{Op}_h(S_{\xi'}^m):=\mathcal{A}^m_{h,i}+\mathcal{A}^m_{h,,\partial}.
\end{gather*}

Denote by $U$ a tubular neighborhood of $\partial\Omega$. Consider functions of the form $a=a_i+a_{\partial}$ with $a_i\in C_c^{\infty}(\Omega\times\mathbb{R}^{d})$ which can be viewed as a symbol in $S^0$, and $a_{\partial}\in C_c^{\infty}(U\times\mathbb{R}^{d-1})$ can be viewed as a symbol in $S_{\xi'}^0$. We quantize $a$ via the formula (in local coordinate)
\begin{equation*}
\begin{split}
\textrm{Op}_h(a)f(y,x')=&
\frac{1}{(2\pi h)^d}\int_{\mathbb{R}^{2d}}
e^{\frac{i(x-z)\xi}{h}}a_i(x,\xi)f(z)dzd\xi\\
&+\frac{1}{(2\pi h)^{d-1}}
\int_{\mathbb{R}^{2(d-1)}}
e^{\frac{i(x'-z')\xi'}{h}}
a_{\partial}(y,x',\xi')f(y,z')dz'd\xi'.
\end{split}
\end{equation*}

Notice that the action of the tangential operator $\mathrm{Op}_h(a_{\partial})$ can be viewed as pseudo-differential operator on the manifold $\partial\Omega$, parametrized by the parameter $y\in[0,\epsilon_0)$. No doubt that the definition of the operator $\mathrm{Op}_h(a_{\partial})$ depends on the choice of local coordinate of $\partial\Omega$. However, the bounded family of operators $\mathcal{A}^m_{h,\partial}$ is defined uniquely up to a family of operators with norms uniformly dominated by $Ch$, as $h\rightarrow 0$. See \cite{PG1} for more details. Moreover, for any family $(A_h)$, such that 
$$ \|A_h-\textrm{Op}_h(a_{\partial})\|_{L^2\rightarrow L^2}=O(h),
$$
the principal symbol $\sigma(A)$ is determined uniquely as a function on $T^*\partial\Omega$, smoothly depending on $y$, i.e.
$\sigma(A)\in C^{\infty}([0,\epsilon_0)\times T^*\partial\Omega)$. 

When we deal with vector-valued functions, we could require the symbol $a$ to be matrix-valued. Now for any sequence of vector-valued function $w_k$, uniformly bounded in $L^2(\Omega)$, there exists a subsequence (still denoted $w_k$ for simplicity), and a nonnegative definite Hermitian matrix-valued measure $\mu_i$ on $T^*\Omega$ such that
$$ \lim_{k\rightarrow 0}(\textrm{Op}_{h_k}(a_i)w_k|w_k)_{L^2}=\langle\mu_i,a\rangle
:=
\int_{T^*\Omega}\textrm{tr }(ad\mu_i).
$$  
For a proof, see for example \cite{NB1}, and the micro-local version was appeared in \cite{PG2}.

From now on we will only deal with scalar-valued operator, even though we will encounter vector-valued functions in the analysis. Suppose $u_k$ be a sequence of solutions to \eqref{Stokes-semi}, under the assumptions below:
\begin{equation}\label{assumption1}
\begin{split}
&\|u_k\|_{L^2(\Omega)}=O(1),f_k\in H\textrm{ and }\|f_k\|_{L^2(\Omega)}=o(h_k),\\
&\|h\nabla q_k\|_{L^2(\Omega)}=O(1),\int_{\Omega}q_kdx=0,\\
\end{split}
\end{equation}

The following result shows that the interior measure $\mu_i$ is supported on the $\textrm{Car}(P)$.
\begin{proposition}\label{Car}
	Let $a_i\in C_c^{\infty}(\Omega\times\mathbb{R}^d)$ be equal to $0$ near $\mathrm{Car}(P)$, then we have
	$$ \lim_{k\rightarrow\infty}(\mathrm{Op}_{h_k}(a_i)u_k|u_k)_{L^2}= 0.
	$$ %%\mathrm{}这个命令可以把数学环境中的斜体变为整体
\end{proposition}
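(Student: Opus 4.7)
The natural strategy is to reduce the quasi-mode equation to one involving only the velocity $u_k$ by applying the Leray projection $\mathbb{P}$ onto divergence-free vector fields. Since $\mathbb{P}(h\nabla q_k)=0$, $\mathbb{P}f_k=f_k$ (because $f_k\in H$), and $\mathbb{P}u_k=u_k$ (because $u_k\in V$), equation \eqref{quasimode} becomes
\begin{equation*}
(h^2 A - 1)u_k = f_k,\qquad A := -\mathbb{P}\Delta,
\end{equation*}
so $u_k$ is an $o(h)$-quasimode of the Stokes operator $A$ at energy $h^{-2}$. This eliminates the pressure from the equation and makes the relevant operator act on a single vector unknown.

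The key observation is that the characteristic variety of $h^2A-1$, acting on vector fields, coincides with $\mathrm{Car}(P)$. In the interior, the principal symbol of $A$ is $|\xi|^2\,\Pi^\perp(\xi)$ with $\Pi^\perp(\xi)=I-\xi\xi^T/|\xi|^2$, so the principal symbol of $h^2A-1$ is
\begin{equation*}
M(x,\xi) := |\xi|^2\,\Pi^\perp(\xi) - I,
\end{equation*}
whose eigenvalues are $|\xi|^2-1$ on $\xi^\perp$ (multiplicity $d-1$) and $-1$ on $\R\xi$ (multiplicity $1$); at $\xi=0$ the symbol extends smoothly to $-I$. Thus $M$ is invertible whenever $|\xi|^2\neq 1$, including at $\xi=0$. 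This is crucial: a direct scalar parametrix for $P_h=-h^2\Delta-1$ alone would fail at $\xi=0$ (since the pressure then contributes at low frequencies), and that failure is precisely absorbed by the vector/pressure structure of the Stokes system once one works with $A$ in place of $-\Delta$.

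I would then build an elliptic matrix-valued parametrix by setting $b := a_i\,M^{-1}$, which is smooth on $\mathrm{supp}(a_i)$, extending it to a compactly supported symbol in $C_c^\infty(\Omega\times\R^d;\mathrm{End}(\C^d))$, and defining $B := \mathrm{Op}_h(b)$. By semi-classical composition, $B(h^2A-1)=\mathrm{Op}_h(a_i)\,I_d + hR_h$ modulo $O(h^\infty)$, with $R_h$ uniformly bounded on $L^2$. Substituting the equation for $u_k$,
\begin{equation*}
\mathrm{Op}_h(a_i)u_k = Bf_k - hR_h u_k + O_{L^2}(h^\infty),
\end{equation*}
one has $\|\mathrm{Op}_h(a_i)u_k\|_{L^2} \leq \|B\|\,\|f_k\|_{L^2}+h\|R_h\|\,\|u_k\|_{L^2} = o(h)+O(h) = O(h)$, and hence $(\mathrm{Op}_h(a_i)u_k\mid u_k)_{L^2} = O(h) \to 0$. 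The main technical step, and the place where care is needed, is the rigorous verification of the composition formula $B(h^2A-1)=\mathrm{Op}_h(a_i)+hR_h$ modulo $O(h^\infty)$: this requires matrix-valued semiclassical calculus together with handling the nonlocal Leray projector $\mathbb{P}$ (which is not a standard local PDO due to the Dirichlet BVP defining it), but since $\mathrm{supp}(a_i)$ is compactly contained in $\Omega$, only the interior symbol calculus is relevant and the boundary contributions from $\mathbb{P}$ enter as $O(h^\infty)$ errors.
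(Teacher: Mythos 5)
Your reduction $(h^2A-1)u_k=f_k$ via the Leray projector is exact, but it does not remove the pressure; it only hides it inside $A$. For divergence-free $u$ one has $-h^2\Pi\Delta u=-h^2\Delta u+h\nabla q$, where $hq=h^2p_u$ and $p_u$ is the \emph{same} harmonic pressure as in \eqref{quasimode}, determined by a Neumann problem from the boundary values of $\Delta u$. So the step you defer as ``the main technical step'' --- the composition $B(h^2A-1)=\mathrm{Op}_h(a_i)+hR_h+O(h^{\infty})$ --- is exactly where the whole difficulty of the proposition sits, and your justification (boundary contributions of $\mathbb{P}$ are $O(h^{\infty})$ because $\mathrm{supp}\,a_i\Subset\Omega$) fails at low frequencies. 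Spatial interior support does not make the harmonic correction negligible: from $\|q_k\|_{L^2(\Omega)}=O(h_k^{-1})$ and interior elliptic estimates, $h_kq_k$ is bounded in $C^{\infty}_{\mathrm{loc}}$, so $h_k\nabla q_k$ is semiclassically concentrated at $\xi=0$ but of size only $O(1)$ in $L^2_{\mathrm{loc}}$ --- it is \emph{not} $O(h^{\infty})$ there. Since your symbol $b=a_iM^{-1}$ does not vanish near $\xi=0$ (indeed $b(x,0)=-a_i(x,0)I$; the hypothesis only makes $a_i$ vanish near $|\xi|=1$), the term $B(h_k\nabla q_k)$ survives and is a priori $O(1)$, not $O(h)$. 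Hence your conclusion $\|\mathrm{Op}_h(a_i)u_k\|_{L^2}=O(h)$ is unjustified, and is in fact stronger than what is true in general: microlocally near $\xi=0$ the equation only gives $u_k\approx f_k-h_k\nabla q_k$, which is $O(1)$. Your aside about the scalar operator is also backwards: the scalar parametrix does \emph{not} fail at $\xi=0$, since $|\xi|^2-1=-1$ there, and on divergence-free directions your $M^{-1}$ coincides with $(|\xi|^2-1)^{-1}$, so the matrix structure buys nothing at this point.

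The paper's proof uses precisely the scalar parametrix $B=\mathrm{Op}_h\bigl(a_i/(|\xi|^2-1)\bigr)$, writes $\mathrm{Op}_{h_k}(a_i)=B_{h_k}(-h_k^2\Delta-1)+O_{L^2\to L^2}(h_k)$, and then confronts the pressure term $(B_{h_k}h_k\nabla q_k|u_k)$ directly rather than absorbing it into symbolic calculus: it decomposes $B h\nabla q=h\nabla(Bq)-[h\nabla,B]q$, kills the first term by integration by parts against the divergence-free $u_k$ (using that $Bq_k$ is compactly supported in $\Omega$), and estimates the commutator term --- which by size alone is only $O(h_k)\|q_k\|_{L^2}\|u_k\|_{L^2}=O(1)$ --- by invoking Lemma \ref{press.norm}, i.e.\ the \emph{strong} convergence $h_kq_k\to 0$ in $L^2(\Omega)$. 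That lemma rests on soft, non-symbolic input: the extra hypothesis $\|a^{1/2}u_k\|_{L^2}=o(1)$, harmonicity of $q_k$, the maximum principle, and connectedness of $\Omega$. This is exactly the information needed to kill the $O(1)$ low-frequency pressure contribution that your composition claim silently discards; without importing it (or the divergence-free integration by parts), your argument cannot be repaired.
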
 
\begin{proof} Note that the symbol $b(x,\xi)=\frac{a_i(x,\xi)}{|\xi|^2-1}\in S^0$ is well-defined from the assumption on $a_i$. From symbolic calculus, we have
	$$ \textrm{Op}_{h_k}(a_i)=B_{h_k}(-h_k^2\Delta-1)+O_{L^2\rightarrow L^2}(h_k).
	$$
	Therefore
	\begin{equation*}
	\begin{split}
	(B_{h_k}(-h_k^2\Delta-1)u_k|u_k)_{L^2}&=(B_{h_k}f_k|u_k)_{L^2}-(B_{h_k}h_k\nabla q_k|u_k)_{L^2}\\
	&=o(1)+([h_k\nabla,B_{h_k}]q_k|u_k)_{L^2}-(h_k\nabla B_{h_k}q_k|u_k)_{L^2}\\
	&=o(1), \textrm{ as }k\rightarrow\infty,
	\end{split}
	\end{equation*}
	where in the last line we have used the symbolic calculus, integrating by part, and Lemma \ref{press.norm}.
\end{proof}

Now we denote by $Z=j(\mathrm{Car}(P))$.
Proposition \ref{Car} indicates that the interior defect measure $\mu_i$ is supported on 
$Z$. To define the defect measure up to the boundary, we have to check that if 
$a_{\partial}\in C_c^{\infty}(U\times\mathbb{R}^{d-1})$ vanishing near $Z$ (i.e. $a_{\partial}$ is supported in the elliptic region for all $y$ small) then
$$ \lim_{k\rightarrow\infty}(\textrm{Op}_{h_k}(a_{\partial})u_k|u_k)_{L^2}=0.
$$
Indeed, this can be ensured by the analysis of boundary value problem in the elliptic region, and the reader can consult section 6. Now for any family of operator $A_h\in\mathcal{A}_h^0$, let $a=\sigma(A_h)$ be the principal symbol of $A_h$
and we define $\kappa(a)\in C^0(Z)$ via
$\kappa(a)(\rho):=a(j^{-1}(\rho)).$
Note that $Z$ is a locally compact metric space and the set
$$ \{\kappa(a):a=\sigma(A_h),A_h\in\mathcal{A}_h^0\}
$$
is a locally dense subset of $C^0(Z)$.
We then have the following proposition, which guarantees the existence of a Radon measure on $Z$:
\begin{proposition}
	There exists a subsequence of $u_k,h_k$ and a nonnegative definite Hermitian matrix-valued Radon measure $\mu$, such that
	$$ \lim_{k\rightarrow\infty}
	(A_{h_k}u_k|u_k)_{L^2}=\langle\mu,\kappa(a)\rangle, a=\sigma(A_{h}),\forall A_h\in \mathcal{A}_h^0.
	$$
\end{proposition}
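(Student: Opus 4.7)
The strategy is to build $\mu$ by extracting subsequential limits of the sesquilinear forms $a\mapsto (A_{h_k}u_k|u_k)_{L^2}$ and invoking the Riesz representation theorem for Hermitian matrix-valued Radon measures on the locally compact space $Z$. The first task is to verify that any such limit depends only on $\kappa(a)=a\circ j^{-1}\in C^0(Z)$, not on the particular full symbol $a=\sigma(A_h)$ chosen. Decompose $a=a_i+a_{\partial}$ according to $\mathcal{A}_h^0=\mathcal{A}_{h,i}^0+\mathcal{A}_{h,\partial}^0$. If $\kappa(a)\equiv 0$ on $Z$, then $a_i$ vanishes on $\mathrm{Car}(P)$, so Proposition \ref{Car} disposes of the interior contribution; meanwhile $a_{\partial}$ vanishes on $\mathcal{G}\cup\mathcal{H}$, i.e.\ $a_{\partial}$ is supported in the elliptic region $\mathcal{E}$ for $y$ near $0$. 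The elliptic boundary estimate announced just before the proposition statement then gives $(\mathrm{Op}_{h_k}(a_{\partial})u_k|u_k)_{L^2}\to 0$; this is proved by constructing a tangential parametrix for $-h^2\Delta-1$ in $\mathcal{E}$ and absorbing the pressure contribution in the spirit of Proposition \ref{Car}, using the harmonicity of $q_k$ together with the bound on $\|h\nabla q_k\|_{L^2}$ from \eqref{assumption1}.

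Once the limit is localized on $Z$, the next step is the uniform estimate
\[ \limsup_{k\to\infty}\bigl|(A_{h_k}u_k|u_k)_{L^2}\bigr|\leq C\,\|\kappa(a)\|_{C^0(Z)}, \]
which follows by applying the sharp G\r ding inequality to $c\cdot\mathrm{I}-A_h$ for any $c>\|\kappa(a)\|_{C^0(Z)}$, modulo the kernel just identified. Because $Z$ is locally compact and second countable, $C_0(Z;\mathrm{Herm})$ is separable, and the collection $\{\kappa(\sigma(A_h))\}$ is stated in the paper to be dense in it. Pick a countable dense family $\{A_h^{(n)}\}$ and apply a standard diagonal extraction to produce a subsequence $h_{k_j}$ along which $(A_{h_{k_j}}^{(n)}u_{k_j}|u_{k_j})_{L^2}$ converges for every $n$; the uniform estimate lets this limit extend continuously to a bounded linear functional $L$ on $C_0(Z;\mathrm{Herm})$.

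Non-negativity of $L$ is obtained from a second application of sharp G\r ding: given $\kappa(a)\geq 0$, choose a representative $\tilde a=\sigma(\tilde A_h)$ with $\tilde a\geq 0$ globally and $\kappa(\tilde a)=\kappa(a)$; then $(\tilde A_{h_k}u_k|u_k)_{L^2}\geq -Ch_k\|u_k\|_{L^2}^2$, and passing to the limit yields $L(\kappa(a))\geq 0$. The Riesz representation theorem for non-negative Hermitian matrix-valued Radon measures then produces the desired $\mu$ on $Z$.

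The main obstacle is the elliptic boundary analysis invoked above: proving rigorously that tangential operators with symbols supported in $\mathcal{E}$ contribute asymptotically nothing, \emph{despite} the pressure gradient $h\nabla q_k$ being only uniformly bounded in $L^2$ and not small. One cannot simply integrate by parts; instead one must exploit the harmonicity of $q_k$ and combine a tangential elliptic parametrix with a careful manipulation in the normal variable to trade derivatives against the elliptic gain, in the same spirit as the computation in Proposition \ref{Car} but adapted to the boundary setting (this is the content that the paper defers to a later section).
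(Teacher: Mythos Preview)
Your proposal is correct and follows the standard construction of semi-classical defect measures up to the boundary. The paper itself does not supply a proof of this proposition: it simply refers the reader to \cite{NB1} (and to \cite{BL}, \cite{PG2} for the microlocal version), so there is no ``paper's own proof'' to compare against beyond those references. Your outline---localization on $Z$ via Proposition \ref{Car} and the elliptic boundary analysis, a uniform bound in terms of $\|\kappa(a)\|_{C^0(Z)}$, diagonal extraction on a countable dense family, positivity via sharp G{\aa}rding, and Riesz representation---is precisely the argument carried out in those sources, so you have reconstructed what the paper is citing.

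One minor remark: for the uniform estimate you invoke sharp G{\aa}rding applied to $c\cdot\mathrm{I}-A_h$, but since $a$ need only satisfy $|a|\leq c$ on $Z$ and not globally, you must first modify $a$ off $Z$ (which you acknowledge with ``modulo the kernel just identified''). An equivalent and slightly cleaner route is to split $a=a_1+a_2$ with $\|a_1\|_{L^\infty}\leq \|\kappa(a)\|_{C^0(Z)}+\epsilon$ and $\kappa(a_2)=0$, then bound the $a_1$ piece by Calder\'on--Vaillancourt and kill the $a_2$ piece by the first step. Either way the argument goes through.
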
 
The proof of this result can be found in \cite{NB1}, see also \cite{BL} and \cite{PG2} for its micro-local counterpart. Notice that if we write 
$ a=a_i+a_{\partial},
$
then
$$
(A_k u_k| u_k)\rightarrow \int_{T^*\Omega}\textrm{ tr }(a_i(\rho)d\mu_i(\rho))+
\int_Z \textrm{ tr }(a_{\partial}(\rho)d\mu(\rho)).
$$

The following result shows that information about frequencies higher than the scale $h_k^{-1}$ is not lost, and the measure $\mu$ contains the relevant information of the sequence $(u_k)$.
\begin{proposition}[\cite{Sun}]\label{frequencyconcentrate}
The sequence of solution $(u_k)$ is $h_k-$oscillating in the following sense:
$$ \lim_{R\rightarrow\infty}\limsup_{k\rightarrow\infty}
\int_{|\xi|\geq Rh_k^{-1}}
|\widehat{\psi u_k}(\xi)|^2d\xi=0,\forall \psi\in C_c^{\infty}(\Omega),
$$
$$ \lim_{R\rightarrow\infty}\limsup_{k\rightarrow\infty}
\int_0^{\epsilon_0}dy\int_{|\xi'|\geq Rh_k^{-1}}
|\widehat{\psi u_k}(y,\xi')|^2d\xi'=0,\forall \psi\in C_c^{\infty}(\ov{\Omega}),
$$
where in the second formula,  the Fourier transform involved is only the $x'$ direction.
\end{proposition}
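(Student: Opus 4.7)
The plan is to reduce both statements to a single a priori estimate on the semi-classical gradient of $u_k$, then apply Plancherel and an elementary tail argument. The crucial estimate is $\|h\nabla u_k\|_{L^2(\Omega)}=O(1)$. To get it, I pair \eqref{quasimode} with $u_k$ in $L^2(\Omega)$ and integrate by parts: since $u_k\in H^2\cap V$ satisfies $\textrm{div }u_k=0$ and $u_k|_{\partial\Omega}=0$, the pressure contribution vanishes,
$$ (h\nabla q_k, u_k)_{\Omega} = -h(q_k,\textrm{div }u_k)_{\Omega} + h(q_k, u_k\cdot \nu)_{\partial\Omega} = 0, $$
so that
$$ h^2\|\nabla u_k\|_{L^2}^2 = \|u_k\|_{L^2}^2 + (f_k,u_k)_{\Omega} = O(1) + o(h)\cdot O(1) = O(1), $$
using the hypotheses \eqref{assumption1}.

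For the interior statement, I fix $\psi\in C_c^{\infty}(\Omega)$, extend $\psi u_k$ by zero to $\mathbb{R}^d$, and bound
$$ \|h\nabla(\psi u_k)\|_{L^2(\mathbb{R}^d)} \leq \|\psi\|_{L^{\infty}} \|h\nabla u_k\|_{L^2} + h\|\nabla\psi\|_{L^{\infty}}\|u_k\|_{L^2} = O(1). $$
Plancherel then gives $\int h^2|\xi|^2 |\widehat{\psi u_k}(\xi)|^2 d\xi=O(1)$, and the tail inequality $1\leq R^{-2}h^2|\xi|^2$ on $\{|\xi|\geq Rh_k^{-1}\}$ yields $\int_{|\xi|\geq Rh_k^{-1}}|\widehat{\psi u_k}|^2 d\xi\leq C/R^2$, which goes to $0$ as $R\to \infty$. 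For the tangential statement, I fix $\psi\in C_c^{\infty}(\ov{\Omega})$ supported in a tubular neighborhood and work in the geodesic coordinates $(y,x')$. Applying Plancherel only in $x'$ for each fixed $y$ and then integrating in $y$,
$$ \int_0^{\epsilon_0}dy\int h^2|\xi'|^2 |\widehat{\psi u_k}(y,\xi')|^2 d\xi' = \|h\nabla_{x'}(\psi u_k)\|_{L^2_{y,x'}}^2 = O(1), $$
where the last equality uses the comparability of the Lebesgue and Riemannian volume elements together with the bound $\|h\nabla_{x'} u_k\|_{L^2}\leq C\|h\nabla u_k\|_{L^2} = O(1)$ obtained in Step~1. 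The same tail argument then gives the announced $O(R^{-2})$ bound.

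The only delicate point is the energy identity itself: it requires the regularity $u_k\in H^2\cap V$ and $q_k\in H^1(\Omega)$ built into the hypotheses to perform the two integrations by parts rigorously, and the cancellation then comes from the compatibility between the divergence-free condition and the Dirichlet boundary condition satisfied by $u_k$. Once this is in hand, the rest of the proof is elementary Fourier analysis combined with the product rule, with no need to exploit either the semiclassical measure formalism or propagation arguments.
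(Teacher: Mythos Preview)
Your argument is correct. The paper does not actually prove this proposition; it is quoted from \cite{Sun} and used as a black box. What you have supplied is the standard self-contained proof: the key input is the semiclassical energy bound $\|h_k\nabla u_k\|_{L^2(\Omega)}=O(1)$, which the paper also derives in Section~5 exactly as you do (pairing \eqref{quasimode} with $u_k$ and using that the pressure term drops out thanks to $u_k\in V$). From there, $h$-oscillation follows from the elementary Chebyshev-type tail bound
\[
\int_{|\xi|\ge Rh_k^{-1}}|\widehat{\psi u_k}|^2\,d\xi\le R^{-2}\int h_k^2|\xi|^2|\widehat{\psi u_k}|^2\,d\xi=R^{-2}\|h_k\nabla(\psi u_k)\|_{L^2}^2=O(R^{-2}),
\]
and the tangential version is the same argument applied in $x'$ only, using that the coordinate change near the boundary has bounded Jacobian so that $\|h_k\partial_{x'}u_k\|_{L^2}\le C\|h_k\nabla u_k\|_{L^2}$.

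Two small remarks on presentation. First, in the identity $h^2\|\nabla u_k\|^2=\|u_k\|^2+(f_k,u_k)$ you are implicitly taking real parts (or invoking Cauchy--Schwarz) to pass to the $O(1)$ bound; this is harmless but worth saying. Second, for the tangential statement the hypothesis $\psi\in C_c^\infty(\overline{\Omega})$ only makes sense once $\psi$ is supported in the tubular neighbourhood where the coordinates $(y,x')$ are defined; you already note this, and a general $\psi$ is handled by a partition of unity combining the interior and boundary estimates.
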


A direct consequence is the following:
\begin{corollary}\label{vanishing}
Suppose $a^{1/2}u_k\rightarrow 0$ in $L^2(\Omega)$, and $\mu$ is the defect measure associated with $(u_k,h_k)$, then
$$ \langle\mu,a\rangle=0.
$$  
\end{corollary}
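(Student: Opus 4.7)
The plan is to recognize $\langle\mu,a\rangle$ as $\lim_{k\to\infty}(au_k|u_k)_{L^2(\Omega)}$, which then equals $\lim_{k\to\infty}\|a^{1/2}u_k\|_{L^2}^2=0$ by hypothesis. The difficulty is that $a(x)$, viewed as a symbol, is not compactly supported in $\xi$, so multiplication by $a$ does not directly belong to $\mathcal{A}_h^0$. The remedy is a frequency-cutoff argument whose remainder is controlled via the $h_k$-oscillation property from Proposition \ref{frequencyconcentrate}.

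First, using a partition of unity subordinate to the cover $\{\Omega, U\}$, I would split $a=a_i+a_\partial$ with $a_i\in C_c(\Omega)$ and $a_\partial\in C(\overline{\Omega})$ supported in the tubular neighborhood $U$, mollifying if needed to obtain smooth representatives. Pick $\phi\in C_c^\infty(\mathbb{R}^d)$ and $\phi'\in C_c^\infty(\mathbb{R}^{d-1})$ both equal to $1$ on the ball of radius $2$, and for $R\geq 2$ set $\phi_R(\xi)=\phi(\xi/R)$, $\phi'_R(\xi')=\phi'(\xi'/R)$. The symbol
\[
b_R(x,y,x',\xi,\xi'):=a_i(x)\phi_R(\xi)+a_\partial(y,x')\phi'_R(\xi')
\]
belongs to $\mathcal{A}_h^0$. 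On the characteristic set $Z$ we have $|\xi|^2=1$ in the interior and $|\xi'|^2_\alpha\leq 1$ near the boundary, hence $\phi_R\equiv 1$ and $\phi'_R\equiv 1$ on $Z$. Consequently $\kappa(b_R)=\kappa(a)$ on $Z$, and the preceding proposition yields
\[
\lim_{k\to\infty}(\mathrm{Op}_{h_k}(b_R)u_k|u_k)_{L^2}=\langle\mu,a\rangle\quad\text{for every }R\geq 2.
\]

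To conclude, I would compare $(au_k|u_k)$ with $(\mathrm{Op}_{h_k}(b_R)u_k|u_k)$. Standard semi-classical symbolic calculus (multiplication by $a$ is the quantization of the $\xi$-independent symbol $a(x)$) together with Cauchy--Schwarz gives
\[
|(au_k|u_k)-(\mathrm{Op}_{h_k}(b_R)u_k|u_k)|\leq C\|(1-\phi_R(h_kD_x))\psi_iu_k\|_{L^2}+C\|(1-\phi'_R(h_kD_{x'}))\psi_\partial u_k\|_{L^2}+O(h_k),
\]
where $\psi_i\in C_c^\infty(\Omega)$ and $\psi_\partial\in C_c^\infty(\overline{\Omega})$ are cutoffs equal to $1$ on the supports of $a_i$ and $a_\partial$ respectively. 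By Plancherel each term equals an integral of $|\widehat{\psi u_k}|^2$ (or its tangential counterpart) over $\{|\xi|\geq Rh_k^{-1}\}$, and Proposition \ref{frequencyconcentrate} states precisely that these remainders vanish after taking $\limsup_k$ and then $R\to\infty$. Combining everything, $\langle\mu,a\rangle=\lim_k(au_k|u_k)=\lim_k\|a^{1/2}u_k\|_{L^2}^2=0$.

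The main obstacle I anticipate is the boundary piece: the tangential quantization $\mathrm{Op}_h(a_\partial\phi'_R)$ is only defined up to $O(h)$ by the choice of local chart on $\partial\Omega$, so one must paste different chart contributions via a partition of unity on the boundary and verify that the various $O(h)$ errors are controlled uniformly in $R$ and $k$ before letting $R\to\infty$.
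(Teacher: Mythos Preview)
Your proposal is correct and is precisely the argument the paper has in mind: the paper gives no explicit proof of Corollary~\ref{vanishing}, merely stating that it is ``a direct consequence'' of Proposition~\ref{frequencyconcentrate}, and what you have written is the standard unpacking of that phrase. The frequency truncation $b_R$, the observation that $\kappa(b_R)=\kappa(a)$ on $Z$ since $|\xi|\le 1$ (resp.\ $|\xi'|_\alpha\le 1$) there, and the control of the high-frequency remainder via $h_k$-oscillation is exactly the intended route; your anticipated obstacle about chart dependence of the tangential quantization is real but harmless, since those discrepancies are $O(h_k)$ uniformly in $R$.
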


\subsection{Recall of propagation theorem}
Now let us recall the several results proved in \cite{Sun}.

In the interior, the full transport property of defect measure is proved.
\begin{proposition}[\cite{Sun}]\label{interior propagation}
	For any real-valued scalar function $a\in C_c^{\infty}(\Omega\times\mathbb{R}^d)$ vanishing near $\xi=0$, we have
	$$ \frac{d}{ds}\langle\mu,a\circ \gamma(s,\cdot)\rangle=0.
	$$
\end{proposition}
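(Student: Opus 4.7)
The plan is to prove the infinitesimal invariance $\langle \mu, H_p a\rangle = 0$ for every real-valued $a \in C_c^{\infty}(\Omega \times \mathbb{R}^d)$ vanishing near $\xi = 0$, and then deduce $\tfrac{d}{ds}\langle \mu, a \circ \gamma(s, \cdot)\rangle = 0$ by applying this identity to the shifted symbol $a \circ \gamma(s_0, \cdot)$---which remains admissible as long as its support stays compact in $\Omega$. Setting $A_h := \mathrm{Op}_{h_k}(a)$ and $P_h := -h_k^2 \Delta - 1$, I would evaluate the commutator bracket
\[
I_k := \frac{1}{i h_k}\bigl([P_h, A_h] u_k \mid u_k\bigr)_{L^2}
\]
in two ways. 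The semiclassical symbolic calculus yields $\frac{1}{i h_k}[P_h, A_h] = \mathrm{Op}_{h_k}(H_p a) + O_{L^2 \to L^2}(h_k)$ with $H_p a = 2\xi \cdot \nabla_x a$, so $I_k \to \langle \mu, H_p a\rangle$ by the very definition of the defect measure.

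On the other hand, expanding the commutator and invoking the equation $P_h u_k = f_k - h_k \nabla q_k$---with the self-adjointness of $P_h$ legitimately used because $A_h u_k$ is $O(h^{\infty})$-supported in a fixed compact subset of $\Omega$---one arrives at
\[
I_k = \frac{1}{i h_k}\!\left[(A_h u_k \mid f_k) - (A_h f_k \mid u_k)\right] - \frac{1}{i}\!\left[(A_h u_k \mid \nabla q_k) - (A_h \nabla q_k \mid u_k)\right].
\]
The source bracket is $o(1)$ since $\|f_k\|_{L^2} = o(h_k)$ by \eqref{assumption1}.

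The main obstacle is the pressure bracket. Here $A_h$ is scalar, acting componentwise on vector fields, and the key structural fact is $\nabla \cdot u_k = 0$. Using $[\partial_j, A_h] = \mathrm{Op}_{h_k}(\partial_{x_j} a)$ together with integration by parts (no boundary terms, thanks to the compact $x$-support of $a$), I would reduce each of $(A_h \nabla q_k \mid u_k)$ and $(A_h u_k \mid \nabla q_k)$ to sums of the form $\sum_j (\mathrm{Op}_{h_k}(\partial_{x_j} a) q_k \mid u_{k,j})$, with the divergence-free condition killing the leading divergence terms exactly as in the proof of Proposition \ref{Car}. A second integration by parts, again exploiting $\mathrm{div}\, u_k = 0$ through the identity $\sum_j \mathrm{Op}_{h_k}(\partial_{x_j} a) u_{k,j} = \nabla \cdot (A_h u_k) + O(h_k^{\infty})$, together with the refined pressure control in Lemma \ref{press.norm}, should then force the pressure bracket to $0$ in the limit. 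The hard part is precisely this last step: the naive Poincar\'e bound $\|q_k\|_{L^2} = O(h_k^{-1})$ is not strong enough, and one really needs the harmonicity of $q_k$ (encoded in Lemma \ref{press.norm}) to make the cancellation quantitative. Combining the two computations of $I_k$ then yields $\langle \mu, H_p a\rangle = 0$, whence the flow-invariance follows.
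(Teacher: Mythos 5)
A preliminary remark: the paper itself contains no proof of this proposition --- it is quoted from \cite{Sun} --- so your attempt can only be measured against the standard commutator argument, whose skeleton you reproduce correctly: the reduction to the infinitesimal identity $\langle\mu,H_pa\rangle=0$, the two evaluations of $I_k$, the harmless self-adjointness step (in fact $A_hu_k$ is supported exactly in the $x$-projection of $\mathrm{supp}\,a$, so no boundary terms arise at all), and the disposal of the source bracket via $\|f_k\|_{L^2}=o(h_k)$ are all fine, and this is visibly the same mechanism the paper uses for Proposition \ref{Car}.

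The genuine gap sits exactly where you flag ``the hard part'', and Lemma \ref{press.norm} cannot close it. Your integrations by parts are exact: $[\partial_j,A_h]=\mathrm{Op}_{h_k}(\partial_{x_j}a)$ and $\mathrm{div}\,u_k=0$ give $(A_hu_k\mid\nabla q_k)=-\sum_j(\mathrm{Op}_{h_k}(\partial_{x_j}a)u_{k,j}\mid q_k)$ and $(A_h\nabla q_k\mid u_k)=-\sum_j(\mathrm{Op}_{h_k}(\partial_{x_j}a)q_k\mid u_{k,j})$. But each of these is then controlled only by $\|q_k\|_{L^2}\|u_k\|_{L^2}$, and Lemma \ref{press.norm} gives merely $\|h_kq_k\|_{L^2}=o(1)$, i.e.\ $\|q_k\|_{L^2}=o(h_k^{-1})$ --- one full power of $h_k$ short, yielding $o(h_k^{-1})$ instead of $o(1)$. (This bound \emph{does} suffice in Proposition \ref{Car} because there the commutator $[h_k\nabla,B_{h_k}]$ carries an explicit extra factor $h_k$; in $I_k$ that factor is consumed by the division by $ih_k$.) Nor does the difference structure rescue you: taking adjoints of the real symbol $\partial_{x_j}a$ shows $(A_h\nabla q_k\mid u_k)=\overline{(A_hu_k\mid\nabla q_k)}+o(1)$, so the pressure bracket equals $2i\,\Im(A_hu_k\mid\nabla q_k)+o(1)$ --- there is no cancellation, and your proposed ``second integration by parts'' via $\sum_j\mathrm{Op}_{h_k}(\partial_{x_j}a)u_{k,j}=\nabla\cdot(A_hu_k)$ merely undoes the first one. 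The missing mechanism --- and the reason the hypothesis that $a$ vanishes near $\xi=0$, which your proposal never uses, appears in the statement --- is microlocal: taking the divergence of \eqref{Stokes-semi} yields $\Delta q_k=0$, i.e.\ $\mathrm{Op}_{h_k}(|\xi|^2)q_k=0$; since $\partial_{x_j}a$ is supported in a compact subset of $\Omega\times\{|\xi|\geq c>0\}$, which is the elliptic region of $-h^2\Delta$, an elliptic parametrix iteration gives $\|\mathrm{Op}_{h_k}(\partial_{x_j}a)q_k\|_{L^2}=O(h_k^N)\|q_k\|_{L^2}$ for every $N$, whence the pressure bracket is $O(h_k^{\infty})\cdot O(h_k^{-1})=o(1)$. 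In short, the pressure is microlocally concentrated at $\xi=0$ in the interior and your symbol never sees it; without invoking this, the argument does not close.
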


The following proposition illustrates that near a elliptic point on the boundary, there is no accumulation of singularity.
\begin{proposition}[\cite{Sun}]\label{elliptic}
	$\mu\mathbf{1}_{\mathcal{E}}=0$.  If we let $\nu$ be the semi-classical defect measure of the sequence $(h_k\partial_{\mathbf{\nu}}u_k|_{\partial\Omega},h_k)$, then $\nu\mathbf{1}_{\mathcal{E}}=0$. 
\end{proposition}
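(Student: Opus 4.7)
The plan is to prove a microlocal boundary elliptic estimate at any point $\rho_0 = (x'_0, \xi'_0) \in \mathcal{E}$, adapted to the Stokes structure. By continuity of $(y,x')\mapsto |\xi'|^2_{\alpha(y,x')}$, after shrinking $\epsilon_0$ one can pick a conic neighborhood $\mathcal{V}$ of $\rho_0$ on which $|\xi'|^2_{g(y,x')}-1\geq \delta>0$ uniformly for $y\in[0,\epsilon_0]$. It then suffices to show that $(\textrm{Op}_{h_k}(a_\partial)u_k,u_k)_{L^2}\to 0$ for every tangential symbol $a_\partial\in C_c^\infty([0,\epsilon_0)\times \mathcal{V})$, and analogously for the boundary trace $h_k\partial_\nu u_k|_{\partial\Omega}$ to handle $\nu$.

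The principal tangential part of the equation is $\Lambda^2(y,x',hD_{x'})-1$, whose symbol is uniformly elliptic on $\mathrm{supp}(a_\partial)$. Construct the tangential parametrix $B_h=\textrm{Op}_h\bigl(a_\partial/(|\xi'|^2_g-1)\bigr)$, so that
\[
B_h(\Lambda^2-1)=\textrm{Op}_h(a_\partial)+O_{L^2\to L^2}(h).
\]
From \eqref{quasimode} rewritten in the boundary chart,
\[
(\Lambda^2-1)u = h^2\partial_y^2 u - h\nabla q + f + h\cdot(\textrm{l.o.t.}),
\]
and therefore
\[
(\textrm{Op}_h(a_\partial)u,u)_{L^2} = (B_h h^2\partial_y^2 u,u) - (B_h h\nabla q,u) + (B_h f,u) + o(1).
\]
The $f$ contribution is $o(1)$ since $\|f\|_{L^2}=o(h)$; the $h^2\partial_y^2 u$ term is controlled by integrating by parts twice in $y$, using $u|_{y=0}=0$ and the a priori bound $\|h\partial_y u\|_{L^2}=O(1)$ (a consequence of the energy identity for \eqref{quasimode}, in which the pressure drops out since $u\in V$).

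The real obstacle is the pressure contribution $(B_h h\nabla q,u)$. Three ingredients handle it. First, taking the divergence of \eqref{quasimode} and using $\mathrm{div}\, u=\mathrm{div}\, f=0$ shows that $q_k$ is harmonic. Second, Lemma \ref{press.norm} provides $\|q_k\|_{L^2}=O(1)$. Third, split $h\nabla q$ into its tangential part $h\nabla_{x'} q$, which, integrated by parts against $B_h^*$, yields only a commutator of size $O(h)\|q\|_{L^2}$; and its normal part $h\partial_y q$, which is eliminated using the second component equation $(-h^2\Delta_g-1)u_\pe + h\partial_y q=f_\pe$, trading $h\partial_y q$ for an expression elliptic on $\mathcal{V}$ in $u_\pe$. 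The divergence-free equation $h\,\textrm{div}_\pa u_\pa + \tfrac{h}{\sqrt{\det g}}\partial_y(\sqrt{\det g}\,u_\pe)=0$ is then used to express $u_\pe$ in terms of tangential derivatives of $u_\pa$, closing the loop and yielding an $o(1)$ pressure contribution in the elliptic region.

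For the second assertion $\nu\mathbf{1}_{\mathcal{E}}=0$, proceed analogously but through the factorization $-h^2\partial_y^2+\Lambda^2-1=(h\partial_y-\Theta)(-h\partial_y-\Theta)+O(h)$, where $\Theta=\sqrt{\Lambda^2-1}$ is a well-defined tangential elliptic operator on $\mathcal{V}$. Inverting the outgoing factor and restricting to $y=0$ (using $u|_{y=0}=0$) expresses the microlocalized trace $h_k\partial_\nu u_k|_{\partial\Omega}$ as $\Theta^{-1}$ applied to boundary quantities that tend to zero in $L^2(\partial\Omega)$; passing to defect measures gives $\nu\mathbf{1}_{\mathcal{E}}=0$.
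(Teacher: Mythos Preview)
The paper does not supply its own proof of this proposition; it is quoted from \cite{Sun} (and the forward reference ``section~6'' in the paragraph before Proposition~3.2 is to that analysis). So the comparison is only with the standard elliptic boundary-layer argument for such results. Your sketch has two genuine gaps.

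First, the tangential parametrix alone cannot work for $\mu\mathbf{1}_{\mathcal{E}}=0$. You invert only $\Lambda^2-1$ and push $h^2\partial_y^2 u$ to the right-hand side, then claim it is ``controlled by integrating by parts twice in $y$''. But one integration by parts (the boundary term vanishing thanks to $u|_{y=0}=0$) yields $(h\partial_y u,\,h\partial_y B_h^{*}u)$, which is $O(1)$ by the a~priori bound $\|h\nabla u\|_{L^2}=O(1)$, not $o(1)$. The normal-derivative term is of the same order as the principal part; it is not a remainder. What is needed is exactly the factorization $-h^2\partial_y^2+(\Lambda^2-1)=(hD_y-i\Theta)(hD_y+i\Theta)+O(h)$ that you invoke only for the second assertion: set $v_{\pm}=(hD_y\pm i\Theta)u$, use the Dirichlet condition to relate $v_+|_{y=0}$ and $v_-|_{y=0}$, and run first-order energy estimates in $y$ for each factor. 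This is what forces both $\mu\mathbf{1}_{\mathcal{E}}=0$ and $\nu\mathbf{1}_{\mathcal{E}}=0$; the two statements are not proved by different mechanisms.

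Second, you misstate Lemma~\ref{press.norm}: it gives $h_kq_k\to 0$ strongly in $L^2$ and $h_k\nabla q_k\rightharpoonup 0$ weakly, but the norm bound on the pressure itself is only $\|q_k\|_{L^2}=O(h_k^{-1})$ (the lemma just before~\ref{press.norm}). Hence your commutator estimate $O(h)\|q\|_{L^2}$ is merely $O(1)$, and the pressure contribution does not close. The correct treatment uses that $q_k$ is harmonic, so it satisfies its own semiclassically elliptic equation $(hD_y-i\Lambda)(hD_y+i\Lambda)q=O(h)$ near the boundary; coupling this factorization with the divergence-free relation and the Dirichlet trace of $u$ is what controls $h\nabla q$ microlocally on $\mathcal{E}$.
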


When a ray travels near a hyperbolic point or a point on the glancing surface, the knowledge of the singularity is much less known. Nevertheless, we have 
\begin{theorem}[\cite{Sun}]\label{St}
	Assume that $\Omega$ is a smooth, bounded domain with no infinite order of contact on the boundary. Suppose $(u_k)$ is a sequence of solutions to the quasi-mode problem \eqref{WStokesmain} with semi-classical parameters $h=h_k$. Assume that $f_k\in H$, $\|f_k\|_{L^2(\Omega)}=o(h_k)$ and  $u_k$ converges weakly to $0$ in $L^2(\Omega)$. Assume that $\mu$ is any semi-classical measure associated to some subsequence of $(u_k,h_k)$, then $\mathrm{supp }\mu$ is invariant under Melrose-Sj\"{o}strand flow. 	
\end{theorem}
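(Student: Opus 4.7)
The plan is to establish invariance of $\mathrm{supp}\,\mu$ under the Melrose--Sj\"ostrand flow by splitting $^bT^*\ov\Omega$ into the three geometric regions and handling each in turn. In the interior, Proposition~\ref{interior propagation} already yields the full transport $\frac{d}{ds}\langle\mu,a\circ\gamma(s,\cdot)\rangle=0$ for compactly supported test symbols, so invariance of the support of $\mu$ along bicharacteristics staying away from $\partial\Omega$ is immediate. In the elliptic boundary zone, Proposition~\ref{elliptic} gives $\mu\mathbf{1}_{\mathcal{E}}=0$, so there is nothing to propagate there either. The genuinely new content therefore concentrates at the hyperbolic and glancing points of $\partial\Omega$, where I would adapt the positive-commutator technique of Bardos--Lebeau--Rauch together with the Melrose--Sj\"ostrand glancing iteration, the new issue being the pressure correction $h\nabla q_k$.

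The key observation that tames the pressure is that $q_k$ is harmonic: applying $\mathrm{div}$ to \eqref{quasimode} and using $\mathrm{div}\,u_k=0$ together with $f_k\in H$ gives $h\Delta q_k=0$. Combined with the divergence-free condition, this lets one control the pressure contribution in any commutator identity against a scalar test symbol $a$ acting diagonally on vector fields: integrating by parts,
\[
\bigl(h\nabla q_k\,\bigm|\,\mathrm{Op}_h(a)u_k\bigr)_\Omega
= -h\bigl(q_k\,\bigm|\,\mathrm{Op}_h(\nabla_x a\,\cdot)u_k\bigr)_\Omega
+ h\int_{\partial\Omega} q_k\,\nu\cdot\mathrm{Op}_h(a)u_k\,d\sigma + O(h^2).
\]
The bounds on $q_k$ and $h\nabla q_k$ coming from \eqref{assumption1}, sharpened by a microlocal elliptic estimate for $q_k$ away from $\{\eta=0\}$ that follows from harmonicity, make the volume term subprincipal relative to the main Poisson bracket $\langle\mu,\{p,a\}\rangle$, while the boundary term is absorbed into the boundary defect measure of $h\partial_\nu u_k|_{\partial\Omega}$ alongside the contribution from $P_h$.

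At a hyperbolic point $\rho_0\in\mathcal{H}$, I would take $a$ microlocally supported near the broken bicharacteristic through $\rho_0$ and split it into pieces living on the two sheets $\eta=\pm\sqrt{r_0}$. The commutator identity for $P_h=-h^2\Delta-1$ produces a boundary pairing in the measure of $h\partial_\nu u_k|_{\partial\Omega}$ that, after using the Dirichlet condition $u_k|_{\partial\Omega}=0$ and the normal component of \eqref{quasimode} restricted to $\partial\Omega$, is recognized as the usual reflection law. The pressure-boundary contribution computed above respects the same reflection symmetry, because the $q_k$-trace only couples to the normal component $(\mathrm{Op}_h(a)u_k)_{\pe}$; hence $\mathrm{supp}\,\mu$ is continuous across $\mathcal{H}$ along the broken flow.

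The most delicate step, and the one I expect to occupy the bulk of the argument, is the glancing analysis. Here I would run the Melrose--Sj\"ostrand iteration with test symbols constructed from $\eta^2$, $r_0$ and nested iterates $H_{-r_0}^k r_1$: the no-infinite-contact hypothesis $\mathcal{G}=\bigcup_{j\geq 2}\mathcal{G}^j$ is precisely what lets the iteration terminate at each glancing point with a Poisson bracket of definite sign. The Stokes-specific obstruction is to rule out that the pressure generates tangential singularities on $\mathcal{G}\setminus\mathcal{G}^{2,+}$; I would handle this by combining harmonicity of $q_k$ with the microlocal elliptic estimate on $\{\eta\neq 0\}$ and showing that any putative concentration of $q_k$ forces, via the divergence-free condition, a matching concentration of the tangential component $u_{k,\pa}$ that is incompatible with Proposition~\ref{frequencyconcentrate} and Corollary~\ref{vanishing}. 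Concatenating the interior transport with hyperbolic reflections and glancing continuity then yields invariance of $\mathrm{supp}\,\mu$ along every generalized bicharacteristic, which is the claim.
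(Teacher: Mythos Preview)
The first thing to notice is that the present paper does \emph{not} prove Theorem~\ref{St} at all: it is stated in Section~3.2 under the heading ``Recall of propagation theorem'' and attributed to the companion preprint \cite{Sun}. The theorem is then used as a black box in Section~6. So there is no proof here to compare your proposal against; what you have written is a sketch of how one might attempt the argument of \cite{Sun}, not a reconstruction of anything in this paper.

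As a standalone outline your architecture is the standard one and is surely what \cite{Sun} follows: interior transport (Proposition~\ref{interior propagation}), triviality on $\mathcal{E}$ (Proposition~\ref{elliptic}), hyperbolic reflection, and a Melrose--Sj\"ostrand glancing iteration. Your identification of the harmonicity $\Delta q_k=0$ as the structural fact that tames the pressure is also correct.

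That said, the sketch underestimates the boundary difficulty. The a~priori bounds proved in Section~5 give only $\|q_k\|_{L^2(\partial\Omega)}=O(h_k^{-1})$, so in your integration-by-parts identity the boundary term $h_k\int_{\partial\Omega}q_k\,\nu\cdot\mathrm{Op}_{h_k}(a)u_k\,d\sigma$ is a priori $O(1)$, not subprincipal; saying it is ``absorbed into the boundary defect measure of $h\partial_\nu u_k|_{\partial\Omega}$'' hides the entire content of the hyperbolic step. One has to show that this pressure boundary contribution and the Neumann-trace contribution from $P_h$ combine in a way compatible with the broken-ray reflection, and that requires analysing the coupled boundary system for $(u_{k,\pa},u_{k,\pe},q_k)$ written out at the start of Section~2.1, not a single scalar identity. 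Likewise, your glancing paragraph asserts that concentration of $q_k$ would force concentration of $u_{k,\pa}$ ``incompatible with Proposition~\ref{frequencyconcentrate} and Corollary~\ref{vanishing}'', but neither of those results says anything about ruling out concentration on $\mathcal{G}$; Proposition~\ref{frequencyconcentrate} is only $h$-oscillation, and Corollary~\ref{vanishing} only kills $\mu$ on $\{a>0\}$. The actual mechanism in the glancing zone has to come from a positive-commutator estimate with a carefully chosen symbol, and the pressure term must be shown to contribute with a compatible sign or to vanish microlocally there --- none of which your outline supplies.
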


%%%%%%%%%%%%%%%%%%%%%%%%%%%%%%%%%%%%%%%%%%%%%%%%%%%%%%%%%%%%%%%%%%%%%%%%%%%%%%%%%%%%%%%%%%%%%%%%%%%%%%%%%%%%%%%%%%%%%

\section{Reduction to Semi-classical observability}

This section is devoted to the proof of Proposition \ref{semi-obser}. In fact, it is classical from \cite{Ge78} 
that stabilization
or observability of a self adjoint evolution system is equivalent to resolvent estimates. See also \cite{BG}, \cite{Burq-Zworski}.

Recall that the damped system is given by
\begin{equation}\label{WStokes1}
\left\{
\begin{aligned}
&\partial^2_tu-\Delta u+a(x)\partial_tu+\nabla p=0,(t,x)\in \mathbb{R}\times \Omega\\
&\mathrm{div } u=0,\textrm{in} \ \Omega \\
&u(t,.)|_{\partial\Omega}=0 \\
&(u(0),\partial_tu(0))=(u_0,v_0)\in V\times H
\end{aligned}
\right.
\end{equation}

We always assume that $\Omega\subset \mathbb{R}^d$ is a bounded domain (open, connected set). We use $\mathbb{\nu}$ to denote the outward normal vector on $\partial\Omega$ and the damping term
$a\in L^{\infty}(\Omega)$ with $a(x)\geq 0$.

We also consider the undamped system
\begin{equation}\label{WStokes-NDamp}
\left\{
\begin{aligned}
&\partial^2_tu-\Delta u+\nabla p=0,(t,x)\in \mathbb{R}\times \Omega\\
&\mathrm{div } u=0,\textrm{in} \ \Omega \\
&u(t,.)|_{\partial\Omega}=0 \\
&(u(0),\partial_tu(0))=(u_0,v_0)\in V\times H
\end{aligned}
\right.
\end{equation}

\subsection{Some functional analysis preliminaries}

We work with a Hilbert space $\mathcal{H}:=V\times H$, equipped with the norm
$$ \|(f,g)^t\|_{\mathcal{H}}^2:=\|\nabla f\|_{L^2(\Omega)}^2+\|g\|_{L^2(\Omega)}^2.
$$
and denote $\Pi:L^2(\Omega)^N\rightarrow H$ be the orthogonal projector(Leray-projector) and $A=\Pi\Delta$ be the Stokes operator. We consider the operator:
\begin{equation}\label{operator}
\mathcal{A}= \left(
\begin{array}{cc}
0 & \mathrm{Id}  \\
A & -\Pi a
\end{array}
\right)
\end{equation}
with domain
$$ D(\mathcal{A})=(V\cap H^2(\Omega))\times V.
$$
In order to use semi-group theory, we first show that for some $\lambda>0,$ the operator
$(\mathcal{A}-\lambda)$ is invertible:
Take $(f,g)\in V\times H$, and consider the system
\begin{equation}\label{inverse}
\left\{
\begin{aligned}
&v-\lambda u=f\\
&Au-(\Pi a+\lambda)v=g \\
\end{aligned}
\right.
\end{equation}
We consider the bilinear form
$$ B(u_1,u_2)=\int_{\Omega}\nabla u_1\cdot\nabla u_2dx+\int_{\Omega}(\lambda^2+\lambda a(x))u_1\cdot u_2dx,
$$
defined on $V\times V$.
We then conclude from Lax-Milgram that for $\lambda>0$, there exists $u\in V$ such that for any
$w\in V$, we have
$$ B(u,w)=-\int_{\Omega}(g\cdot w+(a(x)+\lambda)f\cdot w)dx.
$$
Set $v=\lambda u+f$, we have solved the system \eqref{inverse} in weak sense. Standard regularity argument gives that
for $\lambda>0$.
$$ (\mathcal{A}-\lambda)^{-1}:\mathcal{H}\rightarrow D(\mathcal{A}),
$$
is a bounded, and $(\mathcal{A}-\lambda)^{-1}:\mathcal{H}\rightarrow\mathcal{H}$ is compact. Moreover, if $\lambda\in$ Spec$(\mathcal{A})$, we must have $\Re\lambda<0$. This will be clear in the proof of Proposition \ref{semi-obser1}.

However, since the operator $\mathcal{A}$ is not maximal dissapative, the Hille-Yoshida theorem is not applicable. A slightly general modification ensures the existence of semi-group $e^{t\mathcal{A}}$ which evolves the initial data in $D(\mathcal{A})$ and solves the equation \eqref{WStokes1} with more regular data.

For solution $u,\partial_t u$ to \eqref{WStokes1}, we consider the energy functional
$$E[u](t):=\frac{1}{2}\int_{\Omega}(|\partial_t u(t,x)|^2+|\nabla u(t,x)|^2)dx,
$$
and we calculate
\begin{equation*}
\begin{split}
\frac{d}{dt}E[u](t)&=\int_{\Omega}\partial_t u\cdot (\partial_t^2u-\Delta u)dx \\
&=-\int_{\Omega}\partial_t u\cdot \nabla p dx-\int_{\Omega}a(x)|\partial_t u|^2dx\\
&\leq 0,
\end{split}
\end{equation*}
thus
\begin{equation}\label{dissapation}
E[u](t)\leq E[u](s),\forall s\leq t.
\end{equation}
By density argument, we can solve \eqref{WStokes1} with initial data in $\mathcal{H}$ such that the energy dissipation
\eqref{dissapation} still holds.

%%%%%%%%%%%%%%%%%%%%%%%%%%%%%%%%%%%%%%%%%%%%%%%%%%%%%%%%%%%%%%%%%%%%%%%%%%%%%%%%%%%%%%%%%%%%%%%%%%%%%%%%%%%%%%%%%%%%%%
%%%%%%%%%%%%%%%%%%%%%%%%%%%%%%%%%%%%%%%%%%%%%%%%%%%%%%%%%%%%%%%%%%%%%%%%%%%%%%%%%%%%%%%%%%%%%%%%%%%%%%%%%%%%%%%%%%%%%%

\subsection{Observability and Stabilization}
In this section, we will prove the stabilization for damped system is equivalent to observability for undamped system. For this part, we follow closely in the appendix of \cite{BG} in which the authors have sketched the standard argument for damped wave equation. 

We first introduce the quantity
$$ D[u](T)=\int_0^T\int_{\Omega}a(x)|\partial_t u(t,x)|^2dxdt,
$$
and it quantifies the dissipation of the energy:
$$ E[u](T)=E[u](0)-D[u](T).
$$
\begin{proposition}\label{equivalence}
	The following assertions are equivalent:
	\begin{enumerate}
		\item Stabilization: There exists $C_0,\alpha>0$, such that for every solution $u\in C(\mathbb{R};V\cap H^2(\Omega))\cap C^1(\mathbb{R};V)$ to the damped system \eqref{WStokes1}, we have
		$$  E[u](t)\leq C_0E[u](0)e^{-\alpha t},\forall t\geq 0.
		$$
		\item Observability: There exists $C>0$ and $T>0$, such that, for every solution $v\in C(\mathbb{R};V\cap H^2(\Omega))\cap C^1(\mathbb{R};V)$ to the undamped system \eqref{WStokes-NDamp}, the observability inequality holds:
		$$ E[v](0)\leq C D[v](T).
		$$
	\end{enumerate}
\end{proposition}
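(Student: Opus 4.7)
The plan is to establish the equivalence by the classical energy-comparison argument of Haraux, as sketched in the appendix of \cite{BG} for the wave equation. The common backbone of both directions is the dissipation identity
\begin{equation*}
E[u](T)=E[u](0)-D[u](T),
\end{equation*}
obtained by multiplying the damped equation by $\partial_t u$, together with the following comparison lemma: if $(u_0,v_0)\in D(\mathcal{A})$ and $u$, $v$ denote respectively the damped and undamped solutions with this common data, then $w:=v-u$ solves
\begin{equation*}
\partial_t^2 w-\Delta w+\nabla\tilde{p}=a(x)\partial_t u,\quad \mathrm{div}\,w=0,\quad w|_{\partial\Omega}=0,
\end{equation*}
with zero initial data. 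Applying the undamped Duhamel formula in $\mathcal{H}=V\times H$ and using that the Leray projector is a contraction on $L^2$ yields
\begin{equation*}
\sup_{[0,T]}E[w](t)^{1/2}\lesssim \int_0^T\|a\partial_t u(s)\|_{L^2}\,ds\lesssim \bigl(\|a\|_\infty T\,D[u](T)\bigr)^{1/2},
\end{equation*}
so in particular $\int_0^T\!\int_\Omega a|\partial_t w|^2\,dx\,dt\leq C T^2\|a\|_\infty^2\, D[u](T)$.

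First I would prove $(2)\Rightarrow(1)$. From the observability $E[v](0)\leq C\, D[v](T)$ and the pointwise bound $|\partial_t v|^2\leq 2|\partial_t u|^2+2|\partial_t w|^2$, the comparison lemma gives
\begin{equation*}
E[u](0)=E[v](0)\leq C\,D[v](T)\leq K\, D[u](T),
\end{equation*}
with $K=2C(1+CT^2\|a\|_\infty^2)$. Combined with the dissipation identity this yields $E[u](T)\leq(1-K^{-1})\,E[u](0)$, and iterating on the successive intervals $[nT,(n+1)T]$ produces the exponential decay at rate $\alpha=-T^{-1}\log(1-K^{-1})>0$.

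The reverse implication $(1)\Rightarrow(2)$ is the more delicate direction and is where I expect the main obstacle. Stabilization combined with the dissipation identity gives $D[u](T)\geq (1-C_0 e^{-\alpha T})\,E[u](0)\geq \tfrac{1}{2}E[v](0)$ once $T$ is chosen large enough, while the symmetric bound $|\partial_t u|^2\leq 2|\partial_t v|^2+2|\partial_t w|^2$ provides
\begin{equation*}
D[u](T)\leq 2\,D[v](T)+C T^2\|a\|_\infty^2\, D[u](T).
\end{equation*}
One would like to absorb the last term into the left-hand side, which however requires $T$ small, in tension with the large-$T$ requirement coming from stabilization. The way around this is to work on a long interval $[0,NT_0]$ with $T_0$ fixed small enough to make the absorption effective, apply the comparison on each subinterval $[nT_0,(n+1)T_0]$ (restarting the undamped evolution from the damped state at time $nT_0$ each time), and then sum the resulting local comparison estimates. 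The exponential drop of $E[u]$ over the long interval, together with the dissipation identity, then converts the summed local bounds into a global observability inequality $E[v](0)\leq C'\,D[v](NT_0)$, completing the proof.
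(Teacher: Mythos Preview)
Your direction $(2)\Rightarrow(1)$ is fine and essentially coincides with the paper's argument. The real issue is in $(1)\Rightarrow(2)$.

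The difficulty you identify is artificial and your proposed cure has a gap. If on each subinterval $[nT_0,(n+1)T_0]$ you restart the undamped evolution from the damped state $u(nT_0)$, you are comparing $u$ with a \emph{different} undamped solution $v_n$ on each piece; summing the local estimates yields a bound on $D[u](NT_0)$ in terms of $\sum_n D[v_n]$, not in terms of $D[v](NT_0)$ for the original undamped solution $v$ with data $(u_0,v_0)$. There is no obvious mechanism to pass from the $v_n$'s back to $v$, so the argument as sketched does not close.

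The paper avoids the large-$T$/small-$T$ tension entirely by replacing your Duhamel bound with a direct energy computation for $w=u-v$. Writing the equation for $w$ in the two equivalent forms
\[
\partial_t^2 w-\Delta w+\nabla q=-a\partial_t u,\qquad \partial_t^2 w-\Delta w+a\partial_t w+\nabla q=-a\partial_t v,
\]
and multiplying by $\partial_t w$, one gets $\tfrac{d}{dt}E[w]=-\int_\Omega a\,\partial_t u\cdot\partial_t w$ and also $\tfrac{d}{dt}E[w]+\int_\Omega a|\partial_t w|^2=-\int_\Omega a\,\partial_t v\cdot\partial_t w$. A Young inequality then yields
\[
E[w](T)\leq B\,\min\bigl(D[u](T),\,D[v](T)\bigr)
\]
with an \emph{absolute} constant $B$ (no factor $T$). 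This symmetric estimate, which your Duhamel approach misses, makes $(1)\Rightarrow(2)$ immediate: from stabilization one first gets damped observability $E[u](0)\leq C\,D[u](T)$ for large $T$, and then either $D[u](T)\leq D[v](T)$ (trivial case) or $D[u](T)\leq 2D[w](T)+2D[v](T)$ with $D[w](T)\lesssim \sup_{[0,T]}E[w]\lesssim D[v](T)$, so $E[v](0)\leq C'D[v](T)$. The key point you are missing is that the second form of the $w$-equation (as a \emph{damped} equation with source $a\partial_t v$) produces the estimate $E[w]\lesssim D[v]$ without any $T$-loss; Duhamel for the undamped equation with source $a\partial_t u$ only gives the asymmetric bound $E[w]\lesssim T\,D[u]$, which is what forces you into the subdivision trick.
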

\begin{proof}
	We first claim that the stabilization of damped system is equivalent to the observability of damped system.
	
	It is clear that
	$$ E[u](0)=E[u](t)-D[u](t).
	$$
	
	Let us first assume the stabilization of damped system. Argue by contradiction, suppose the observability of damped system does not hold. We first choose $T_0>0$ large enough such that $C_0e^{-\alpha T_0}<\frac{1}{2}$. We can select a sequence of solutions $(u_k)$ and such that
	$$  E[u_k](0)=1,D[u_k](T_0)\rightarrow 0,\textrm{ as } k\rightarrow\infty.
	$$
	We thus have
	$$ \frac{1}{2}>C_0e^{-\alpha T_0}\geq E[u_k](T_0)=E[u_k](0)-D[u_k](T_0)=1+o(1), \textrm{ as }k\rightarrow \infty,
	$$
	which leads to a contradiction.

	Let us now assume the observability for damped system, i.e.
	$$  E[u](0)\leq CD[u](T),
	$$
	We may assume that $C>1$, from the energy dissapation and observability, we have
	$$  E[u](2T)=E[u](0)-D[u](2T)\leq \left(1-\frac{1}{C}\right)E[u](0).
	$$
	For any $t>0$, we write $m=\left\lfloor\frac{t}{2T}\right\rfloor$, therefore we have
	$$ E[u](t)\leq E[u](m)\leq \left(1-\frac{1}{C}\right)^m E[u](0),
	$$
	after choosing $C_0,\alpha$ appropriately, we have the stabilization of damped system.
	
	Our second step is to justify the equivalence between observability of damped system \eqref{WStokes1} and undamped system \eqref{WStokes-NDamp}. To do this, we denote $u$ and $v$ be solutions of the damped and of the undamped system, respectively, with the same initial data at $t=0$. Let $w=u-v$, and simple calculations yield
	$$ \partial_t^2 w-\Delta w=-a\partial_t u-\nabla q,
	$$
	$$ \partial_t^2 w-\Delta w+a\partial_t w=-a\partial_t v-\nabla q,
	$$
	with some pressure function $q$.
	
	We calculate
	$$ \frac{d}{dt}E[w](t)=-\int_{\Omega}a(x)|\partial_t u|^2dx+\int_{\Omega}a(x)\partial_tu\cdot\partial_t vdx-\int_{\Omega}\partial_t w\cdot\nabla q dx,
	$$
	and the last term of left-hand side vanishes, thanks to $\partial_t w\in C(\mathbb{R};V).$ Thus we can write
	\begin{equation*}
	\frac{d}{dt}E[w](t)=-\int_{\Omega}a(x)|\partial_t u|^2dx+\int_{\Omega}a(x)\partial_tu\cdot\partial_t vdx
	\end{equation*} 
	or equivalently
	$$\frac{d}{dt}E[w](t)=-\int_{\Omega} a(x)\partial_tu\cdot\partial_twdx.
	$$
	
	Integrating the two expressions above and using the inequality of the type
	$$ |ab| \leq \epsilon |a|^2+C(\epsilon)|b|^2,\forall \epsilon>0, 
	$$
	one easily get
	\begin{equation}\label{w}
	E[w](T)\leq  B \textrm{ min }(D[u](T),D[v](T)),\forall T>0,
	\end{equation}
	where $B$ is another absolute constant.

	Now suppose we have observability for the damped system \eqref{WStokes1}, if $D[u](T)\leq D[v](T)$, the observability of undamped system \eqref{WStokes-NDamp} is trivial. Now assume that $D[u](T)>D[v](T)$, we deduce from  \eqref{w} that
	\begin{equation}
	\begin{split}
	E[v](0)&=E[u](0)\leq CD[u](T)\leq CD[W](T)+CD[v](T)\\&\leq C(E[w](T)+D[v](T))\leq CD[v](T).
	\nonumber
	\end{split}
	\end{equation}
	The derivation of observability from undamped system to the damped follows in the same way, and we omit the details.
\end{proof}

\begin{remark}
	Since the domain $D(\mathcal{A})$ is dense in $\mathcal{H}$ and the observability and energy decay only involves the $L^2$ norm of $\nabla u$ and $\partial_t u$, thus the same result of proposition \ref{equivalence} holds if we replace
	$u\in C(\mathbb{R};V\cap H^2(\Omega))\cap C^1(\mathbb{R};V)$ to $u\in C(\mathbb{R};V)\cap C^1(\mathbb{R};H)$.
\end{remark}

%%%%%%%%%%%%%%%%%%%%%%%%%%%%%%%%%%%%%%%%%%%%%%%%%%%%%%%%%%%%%%%%%%%%%%%%%%%%%%%%%%%%%%%%%%%%%%%%%%%%%%%%%%%%%%%%%%%%%%%
%%%%%%%%%%%%%%%%%%%%%%%%%%%%%%%%%%%%%%%%%%%%%%%%%%%%%%%%%%%%%%%%%%%%%%%%%%%%%%%%%%%%%%%%%%%%%%%%%%%%%%%%%%%%%%%%%%%%%%%

\subsection{Resolvent estimates and stabilization}
Recall that from the previous sections, the study of damped system \eqref{WStokes1} is equivalent to the project system
\begin{equation}\label{matrixevolution}
\begin{split}
&\frac{d}{dt}\left(\begin{array}{c}
u\\
\partial_t u
\end{array}\right)= \left(
\begin{array}{cc}
0 & \mathring{Id}  \\
A & -\Pi a
\end{array}
\right)\left(\begin{array}{c}
u\\
\partial_t u
\end{array}\right),\\
&\left(\begin{array}{c}
u\\
\partial_t u
\end{array}\right)\in C(\mathbb{R};V\times H).
\end{split}
\end{equation}
We will use the notation $U=(u,\partial_t u)^t$ in the sequel.

In this part, we follow almost the same way as in the appendix of \cite{BG}, only to pay attention to the changing of working spaces (appearance of the pressure term and divergence free structure). Moreover, we add some technical details which may seems standard to experts in analysis but not disposable for many
applied people.

From last section, we know that the observability of undamped system \eqref{WStokes-NDamp} is equivalent to the stabilization of damped system \eqref{WStokes1}, therefore we will concentrate ourselves to the study of stabilization of \eqref{WStokes1}. The following result is standard in semigroup theory:
\begin{proposition}\label{decay}
	Consider a semi-group $e^{tL}$ on a Hilbert space $\mathcal{X}$, with infinitesimal generator $L$ defined on $D(L)$. Then if there exists $C>0,\delta>0$ such that the resolvent of $L$, $(L-\lambda)^{-1}$ exists for $\Re\lambda\geq -\delta $ and satisfies
	$$ \forall \lambda\in \mathbb{C}^{\delta}:=\{z\in\mathbb{C}:\Re z>-\delta\},\|(L-\lambda)^{-1}\|_{\mathcal{L}(\mathcal{X})}\leq C.
	$$
	Then there exists $M>0$ such that for any $t>0,$
	$$  \|e^{tL}\|_{\mathcal{L}(\mathcal{X})}\leq Me^{-\frac{\delta t}{2} }.
	$$
\end{proposition}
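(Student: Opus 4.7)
The statement is the classical Gearhart--Pr\"{u}ss theorem for $C_0$-semigroups on Hilbert spaces, so the plan is to sketch its proof (or invoke it directly). The first step is a reduction by shifting: set $\tilde{L} := L + \tfrac{\delta}{2}I$, which generates $\tilde{T}(t) := e^{t\delta/2}e^{tL}$. The identity $(\tilde{L}-\lambda)^{-1}=(L-(\lambda-\tfrac{\delta}{2}))^{-1}$ together with the hypothesis shows that the resolvent of $\tilde{L}$ is uniformly bounded by $C$ on $\{\mathrm{Re}\,\lambda \geq -\tfrac{\delta}{2}\}$, in particular on a neighborhood of the closed right half-plane. It therefore suffices to prove $\|\tilde{T}(t)\|_{\mathcal{L}(\mathcal{X})} \leq M$ for all $t \geq 0$, since unshifting then yields $\|e^{tL}\| \leq M e^{-t\delta/2}$.

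The heart of the matter is a Plancherel identity. For $\sigma > \omega_0(\tilde{T})$ and $x \in \mathcal{X}$, the function $\tau \mapsto (\sigma+i\tau-\tilde{L})^{-1}x$ is (up to a sign) the Fourier transform in $\tau$ of $t \mapsto e^{-\sigma t}\tilde{T}(t)x\,\mathbf{1}_{t\geq 0}$, so Plancherel yields
$$ \int_0^\infty e^{-2\sigma t}\|\tilde{T}(t)x\|^2\,dt = \frac{1}{2\pi}\int_{\mathbb{R}}\|(\sigma+i\tau-\tilde{L})^{-1}x\|^2\,d\tau.$$
The uniform resolvent bound does not by itself make the right-hand side integrable in $\tau$. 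To remedy this, one applies the identity with $x$ replaced by $(\tilde{L}-z_0)y$ for a fixed $z_0 \in \rho(\tilde{L})$. The resolvent equation $(\tilde{L}-\lambda)^{-1}(\tilde{L}-z_0)y = y + (z_0-\lambda)(\tilde{L}-\lambda)^{-1}y$ then supplies a $1/|\tau|$ decay which, combined with the uniform resolvent bound, makes the integral converge. Letting $\sigma \downarrow 0$ produces
$$ \int_0^\infty \|\tilde{T}(t)y\|^2\,dt \leq C_0 \|y\|_{D(\tilde{L})}^2$$
uniformly in $y \in D(\tilde{L})$; running the same argument for the adjoint semigroup (whose generator is $\tilde{L}^*$, whose resolvent satisfies the same bound by duality) yields the analogous estimate on a dense set of initial data.

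Once square integrability in time is available, the semigroup law combined with Cauchy--Schwarz gives $t\|\tilde{T}(t)x\|^2 \leq M_0^2 \int_0^t \|\tilde{T}(s)x\|^2\,ds$ (where $M_0 = \sup_{0 \leq s \leq 1}\|\tilde{T}(s)\|$ comes from local boundedness of the $C_0$-semigroup), so $\|\tilde{T}(t)\| \to 0$, and, since any eventually contractive semigroup decays exponentially, one obtains uniform boundedness (and in fact exponential stability) of $\tilde{T}$. Undoing the shift yields the advertised estimate. The main obstacle is the integrability step in the Plancherel identity: the hypothesis provides only uniform boundedness of the resolvent, not decay in $|\mathrm{Im}\,\lambda|$, and the device of passing to $D(\tilde{L})$ through the resolvent identity is precisely what recovers the missing decay; this is the real content of Gearhart--Pr\"{u}ss, with the rest being bookkeeping.
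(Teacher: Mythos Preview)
You correctly identify the statement as the Gearhart--Pr\"uss theorem and your outline captures the Plancherel-based strategy. However, the paper's proof takes a different technical route and your sketch has a genuine gap at the step you label ``bookkeeping.''

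The paper does not shift the generator; instead, for $u_0\in D(L)$ it introduces a smooth time-cutoff $\chi$ (vanishing for $t\le 1$, equal to $1$ for $t\ge 2$) and sets $u(t)=\chi(t)e^{(L-\omega)t}u_0$, so that $(\partial_t+\omega-L)u=v$ with $v(t)=\chi'(t)e^{(L-\omega)t}u_0$ \emph{compactly supported in time}. Then $\widehat v$ is entire, $\|\widehat u(\tau)\|\le C\|\widehat v(\tau)\|$ from the resolvent bound, and a Paley--Wiener lemma (the paper's Lemma~4.4) allows shifting the integration line to obtain $\int_0^\infty e^{2\alpha t}\|e^{tL}u_0\|^2\,dt\le C\|u_0\|_{\mathcal X}^2$ directly, with the $\mathcal X$-norm on the right (since $v$ is controlled by $\|u_0\|_{\mathcal X}$ alone). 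The passage from this $L^2$-in-time estimate to pointwise decay then uses only the local bound $\sup_{s\in[0,1]}\|e^{sL}\|$.

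Your route via the identity $(\tilde L-\lambda)^{-1}(\tilde L-z_0)y=y+(\lambda-z_0)(\tilde L-\lambda)^{-1}y$ yields $\|(\tilde L-\lambda)^{-1}y\|=O(|\lambda|^{-1})$ only for $y\in D(\tilde L)$, and hence $\int_0^\infty\|\tilde T(t)y\|^2\,dt\le C_0\|y\|_{D(\tilde L)}^2$ with the \emph{graph norm} on the right. This cannot be extended by density to $\mathcal X$, and your subsequent inequality $t\|\tilde T(t)x\|^2\le M_0^2\int_0^t\|\tilde T(s)x\|^2\,ds$ with $M_0=\sup_{[0,1]}\|\tilde T(s)\|$ is false as written: it would require $\|\tilde T(t-s)\|\le M_0$ for all $s\in[0,t]$, which is precisely the uniform bound you are trying to establish. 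The appeal to the adjoint does not close this gap either, since the adjoint argument produces the same graph-norm estimate on $D(\tilde L^*)$. The standard remedy is a \emph{different} use of the resolvent identity, namely $R(\mu_0+i\tau)=R(\sigma+i\tau)+(\sigma-\mu_0)R(\mu_0+i\tau)R(\sigma+i\tau)$ with $\sigma>\omega_0(\tilde T)$ fixed; since $\int\|R(\sigma+i\tau)x\|^2\,d\tau<\infty$ by Plancherel and the a~priori growth bound, the uniform resolvent hypothesis then gives $\int\|R(\mu_0+i\tau)x\|^2\,d\tau\le C\|x\|_{\mathcal X}^2$ for all $x\in\mathcal X$. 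The paper's cutoff device achieves the same end more directly.
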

We need a lemma from complex analysis. We temporarily use the convention of Fourier transform
$$ \widehat{u}(\tau)=\frac{1}{\sqrt{2\pi}}\int_{-\infty}^{\infty}e^{-it\tau}u(t)dt.
$$
\begin{lemma}\label{complex}
	Let $u,v$ be two continuous functions with support in $\mathbb{R}_+=(0,\infty)$. Assume moreover that $u,v\in L^2(\mathbb{R}_+)$ and $v$ has compact support. From Winer-Paley theory, we know that the Fourier transform $\widehat{v}$ admits a holomorphic extension to $\mathbb{C}$ and of exponential type. Given $a_0>0$, suppose that the Fourier transform $\widehat{u}$ is also holomorphic in $S_{a_0}=\{z\in\mathbb{C}:\Im z<a_0\}$ and satisfies
	$$  |\widehat{u}(z)|\leq C |\widehat{v}(z)|,\forall z\in S_{a_0}.
	$$
	Then for any $a<a_0$, $e^{at}u(t)\in L^2(\mathbb{R}_+)$ and
	$$ \int_0^{\infty}e^{2at}|u(t)|^2dt=\int_{-\infty}^{\infty}|\widehat{u}(\tau+ia)|^2d\tau.
	$$
\end{lemma}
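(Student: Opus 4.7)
The plan is to show that $\widehat{u}(\cdot+ia)$, viewed as an $L^2(\mathbb{R})$-function via the holomorphic extension, is the Fourier transform of $e^{at}u(t)$; the claimed identity then follows at once from Plancherel. The case $a\le 0$ is immediate since $|e^{at}u|\le|u|$, so attention is on $0<a<a_0$.

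First, I would verify that $\widehat{u}(\cdot+ia)\in L^2(\mathbb{R})$. Since $v$ is continuous with compact support, $e^{at}v\in L^2(\mathbb{R}_+)$ and a direct computation yields $\mathcal{F}(e^{at}v)(\tau)=\widehat{v}(\tau+ia)$ (the entire extension of $\widehat{v}$ evaluated on the horizontal line $\Im z=a$), so by Plancherel $\|\widehat{v}(\cdot+ia)\|_{L^2(\mathbb{R})}=\|e^{at}v\|_{L^2(\mathbb{R}_+)}<\infty$; the majoration $|\widehat{u}(z)|\le C|\widehat{v}(z)|$ then transfers the bound to $\widehat{u}(\cdot+ia)$. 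Let $w_a\in L^2(\mathbb{R})$ denote its inverse $L^2$-Fourier transform. I would identify $w_a=e^{at}u$ almost everywhere by testing against $\psi\in C_c^\infty(\mathbb{R})$: Plancherel gives $\int w_a\overline{\psi}\,dt=\int\widehat{u}(\tau+ia)\overline{\widehat{\psi}(\tau)}\,d\tau$, while setting $g(t)=e^{at}\psi(t)$ (so that $\widehat{g}(\tau)=\widehat{\psi}(\tau+ia)$) gives $\int e^{at}u\,\overline{\psi}\,dt=\int u\,\overline{g}\,dt=\int\widehat{u}(\tau)\overline{\widehat{\psi}(\tau+ia)}\,d\tau$. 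Introducing the entire function $P(z)=\overline{\widehat{\psi}(\overline{z})}$, which satisfies $P(\tau)=\overline{\widehat{\psi}(\tau)}$ and $P(\tau-ia)=\overline{\widehat{\psi}(\tau+ia)}$, the two right-hand sides read $\int\widehat{u}(\tau+ia)P(\tau)\,d\tau$ and $\int\widehat{u}(\tau)P(\tau-ia)\,d\tau$, and their equality is exactly the contour shift for $H(z)=\widehat{u}(z)P(z-ia)$, holomorphic on $S_{a_0}$, around the rectangle $[-R,R]\times[0,a]$. By density of $C_c^\infty$ in $L^2$, this forces $w_a=e^{at}u$, so $e^{at}u\in L^2(\mathbb{R}_+)$, and a final application of Plancherel closes the proof.

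The main obstacle is to justify that the two vertical contributions $\int_0^a iH(\pm R+it)\,dt$ vanish as $R\to\infty$. On the one hand, $|\widehat{u}(\pm R+it)|$ is uniformly bounded in $R$ on the rectangle by combining $|\widehat{u}(z)|\le C|\widehat{v}(z)|$ with the elementary estimate $|\widehat{v}(\tau+it)|\le (2\pi)^{-1/2}e^{t T_v}\|v\|_{L^1}$ valid for $t\in[0,a]$, where $T_v$ bounds the support of $v$; this bound is independent of $R$ precisely because $v$ has compact support. On the other hand, $|P(\pm R+it-ia)|$ decays faster than any polynomial in $R$, uniformly for $t\in[0,a]$, by the Paley--Wiener estimate $|\widehat{\psi}(\tau+is)|\le C_N e^{|s|T_\psi}(1+|\tau|)^{-N}$ available for $\psi\in C_c^\infty$. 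The product is therefore rapidly decreasing in $R$, which makes the contour-shift step rigorous.
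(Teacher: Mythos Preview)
Your proof is correct and, like the paper's, hinges on a contour shift in the Fourier variable; the difference is that the paper applies Cauchy's theorem directly to the inversion integral $F(a,t)=(2\pi)^{-1/2}\int e^{it\tau}\widehat{u}(\tau+ia)\,d\tau$ and then identifies $F(a,t)=e^{at}u(t)$, whereas you dualize by testing against $\psi\in C_c^{\infty}$ and shift the contour in the pairing $\int \widehat{u}(z)P(z-ia)\,d\tau$. The payoff of your version is that the decay needed to kill the vertical segments is carried entirely by $P=\overline{\widehat{\psi}(\overline{\cdot})}$, which is genuinely rapidly decreasing by Paley--Wiener for $C_c^{\infty}$, while $\widehat{u}$ only has to be uniformly bounded on the rectangle---and that follows from $|\widehat{u}|\le C|\widehat{v}|$ together with the trivial $L^1$ bound on $\widehat{v}$. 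The paper's argument instead asserts that $|\widehat{u}(z)|\le C|\widehat{v}(z)|$ forces $\widehat{u}(\tau+ia)$ to be rapidly decreasing in $\tau$; under the stated hypothesis that $v$ is merely continuous with compact support this is not guaranteed (one only gets $\widehat{v}(\cdot+ia)\in L^2\cap L^{\infty}$), so your route is the more robust justification under the lemma's hypotheses, even if in the paper's actual application $v=\chi'(t)e^{tL-\omega t}u_0$ happens to be $C^{\infty}$ in $t$.
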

\begin{proof}
	We first claim that
	\begin{equation}\label{v}
	\int_0^{\infty}e^{2at}|v(t)|^2dt=\int_{-\infty}^{\infty}|\widehat{v}(\tau+ia)|^2d\tau,\forall a\in\mathbb{R}.
	\end{equation}
	Indeed, since $v$ is compactly supported,
	$$  \widehat{v}(\tau+ia)=\frac{1}{\sqrt{2\pi}}\int_0^{\infty}e^{at}e^{-it\tau}v(t)dt
	$$
	which is analytic in $a$ and rapidly decreasing in $\tau$ for each fixed $a\in\mathbb{R}$. Thus one easily deduce from the Plancherel (or calculate the integral directly) that \eqref{v} is true.

	As a consequence, $\widehat{u}(.+ia)\in L^2(\mathbb{R})$ for each $a<a_0$. Notice also that $u\in L^2(\mathbb{R}_+)$, thus for each $a$ with $\Re a<0$, the formula
	\begin{equation}\label{u}
	\int_0^{\infty}e^{2at}|u(t)|^2dt=\int_{-\infty}^{\infty}|\widehat{u}(\tau+ia)|^2d\tau
	\end{equation}
	holds true and analytic with respect to $a$.
	In particular, $|\widehat{u}(z)|\leq C|\widehat{v}(z)|$,$z\in S_{a_0}$ implies that $\widehat{u}(\tau+ia)$ is rapidly decreasing in $\tau$ for each fixed $a<a_0$. For $z=\tau+ia$ with $a<a_0$, consider the integral
	$$ F(a,t)=\frac{e^{at}}{\sqrt{2\pi}}\int_{-\infty}^{\infty}e^{itz}\widehat{u}(z)d\tau=
	\frac{1}{\sqrt{2\pi}}\int_{-\infty}^{\infty}e^{it\tau}\widehat{u}(z)d\tau\in L^2(\mathbb{R}).
	$$
	From Cauchy integral theorem, we have that
	$$ F(a,t)=\frac{e^{at}}{\sqrt{2\pi}}\int_{-\infty}^{\infty}\widehat{u}(\tau)e^{it\tau}d\tau=e^{at}u(t)\in L^2(\mathbb{R}).
	$$
	From this, we conclude that \eqref{u} follows for all $a<a_0$.
\end{proof}
\begin{remark}
	In the previous lemma, the same results hold true if we replace $u,v$ to be Hilbert-space valued functions.
\end{remark}
\begin{proof}[Proof of proposition \ref{decay}]
	The basic tool to prove this proposition is the Fourier-Laplace transform in time variable. From the property of strongly continuous semi-group, we know that there exists $\omega_0>0$ such that (see \cite{Taylor2}) $$\|e^{tL}\|_{\mathcal{L}(\mathcal{X})}\leq e^{\omega_0 t},\forall t\geq 1.
	$$
	Take $u_0\in D(L)$, and pick a nonnegative cut-off $\chi\in C^{\infty}(\mathbb{R})$ such that $\chi\equiv 0,\forall t\leq 1$ and $\chi\equiv 1, \forall t>2.$  We define $u(t):=\chi(t)e^{tL-\omega t}u_0$ for some $\omega>\omega_0$ and thus $u\in L^{\infty}(\mathbb{R};\mathcal{X})$. Moreover, we have the equation
	$$  (\partial_t+\omega-L)u=\chi'(t)e^{tL-\omega t}u_0=:v(t).
	$$
	By taking Fourier transform we get
	$$   (i\tau+\omega-L)\widehat{u}=\widehat{v}(\tau).
	$$
	Since $v$ is compactly supported in positive axis in time variable, the $\widehat{v}(\tau)$ has a holomorphic and bounded extension in any domain of the form
	$$   S_{\alpha}=\{\tau\in\mathbb{C}:\Im \tau<\alpha\},\alpha>0.
	$$
	From the assumption on the resolvent, we deduce that $(i\tau+\omega-L)$ is invertible if $\tau\in S_{\delta+\omega}$ and thus $\widehat{u}(\tau)$ admits a bounded holomorphic extension to $S_{\delta+\omega}$ which satisfies
	$$  \|\widehat{u}(\tau)\|_{\mathcal{X}}\leq C\|\widehat{v}(\tau)\|_{\mathcal{X}}.
	$$
	Apply Lemma \ref{complex}, we deduce that
	\begin{equation}
	\begin{split}
	\int_{-\infty}^{\infty}\|e^{(\omega_0+\delta)t}u\|_{\mathcal{X}}^2dt&=\int_{-\infty}^{\infty}\|\widehat{u}(\xi+i(\omega_0+\delta))\|_{\mathcal{X}}^2d\xi \\
	&\leq C\int_{-\infty}^{\infty}\|\widehat{v}(\xi+i(\omega_0+\delta))\|_{\mathcal{X}}^2d\xi \\
	&\leq C \int_{-\infty}^{\infty}\|e^{(\omega_0+\delta)t}v\|_{\mathcal{X}}^2dt\leq C\|u_0\|_{\mathcal{X}}^2.
	\nonumber
	\end{split}
	\end{equation}
	We remark that one need use various types of Winer-Paley theorems to justify the above calculations, thanks to the fact that $u(t),v(t)$ is supported on $[1,\infty)$ and furthermore $v(t)$ has compact support.
	Take $\omega<\omega_0+\frac{\delta}{2}$ in the definition of $u$, we have that
	$$ \|e^{\frac{\delta t}{2}}e^{tL}u_0\|_{L^2(\mathbb{R}_{+};\mathcal{X})}\leq C_1\|u_0\|_{\mathcal{X}}.
	$$
	Thanks to the semi-group structure and uniform bound principal, we have that there exists $M_0>0,$ such that for any interval $I\subset (0,+\infty)$ of length $1$,
	$$ \sup_{t\in I,s>0,t+s\in  I}\frac{|f(t+s)|}{|f(t)|}\leq M_0.
	$$
	with $f(t)=\|e^{tL}u_0\|_{\mathcal{X}}$. Therefore, for any $T>0$,
	\begin{equation}
	\int_{T}^{T+1}e^{\delta t}|f(t)|^2dt\geq e^{\delta T}\min_{t\in[T,T+1]}|f(t)|^2.
	\nonumber
	\end{equation}
	Therefore,
	$$ |f(T+1)|^2\leq M_0^2\min_{t\in[T,T+1]}|f(t)|^2\leq e^{-\delta T}\int_{T}^{T+1}e^{\delta u}|f(t)|^2dt.
	$$
	This implies the exponential decay
	$$\|e^{tL}u_0\|_{\mathcal{X}}\leq Me^{-\frac{\delta t}{2}}\|u_0\|_{\mathcal{X}}.$$
\end{proof}

Now we can introduce the semi-classical observability
\begin{proposition}\label{semi-obser1}
	Assume that $a\in L^{\infty}(\Omega)\cap C^0(\ov{\Omega})$ and $\int_{\Omega}adx>0$. Then the stabilization of system \eqref{WStokes1} is implied by the following statement:
	$$ \exists h_0>0,C>0 \textrm{ such that }\forall 0<h<h_0,\forall (u,q,f)\in H^2(\Omega)\cap V\times H^1(\Omega)\times H
	$$
	solves the equation
	$$-h^2\Delta u-u+h\nabla q =f,
	$$
	we have
	\begin{equation}
	\label{semi-ob1}
	\|u\|_{L^2(\Omega)}\leq C\left(\|a^{1/2}u\|_{L^2(\Omega)}+\frac{1}{h}\|f\|_{L^2(\Omega)}\right).
	\end{equation}
\end{proposition}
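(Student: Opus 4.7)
The plan is to convert the exponential stabilization claim into a uniform high-frequency resolvent estimate
\[
\sup_{|\tau|\geq\tau_0}\|(\mathcal{A}-i\tau)^{-1}\|_{\mathcal{L}(\mathcal{H})}<\infty
\]
for the semigroup generator $\mathcal{A}$ defined in \eqref{operator}, and then to derive this bound by contradiction using the hypothesized quasi-mode inequality \eqref{semi-ob1}. By Proposition~\ref{decay} together with Proposition~\ref{equivalence}, such a uniform bound yields \eqref{stab}. Bounded $|\tau|$ is excluded by the compactness of $(\mathcal{A}-\lambda)^{-1}$ together with the absence of purely imaginary eigenvalues: dissipation forces any such eigenmode $(u,i\tau u)$ to satisfy $a^{1/2}u\equiv 0$, after which unique continuation for the stationary Stokes system and $\int_\Omega a\,dx>0$ rule out a nonzero $u$.

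Suppose then the high-frequency bound fails. Extract sequences $|\tau_n|\to\infty$, unit vectors $U_n=(u_n,v_n)\in D(\mathcal{A})$ with $\|U_n\|_{\mathcal{H}}=1$, and $F_n=(\mathcal{A}-i\tau_n)U_n=:(f_{1,n},f_{2,n})$ with $\varepsilon_n:=\|F_n\|_{\mathcal{H}}\to 0$; write $h_n:=1/|\tau_n|$. The first row of the resolvent equation yields $v_n=i\tau_n u_n+f_{1,n}$. Substituting into the second row, multiplying by $-h_n^2$, and absorbing the Leray correction into a harmonic pressure $q_n\in H^1(\Omega)$ gives the quasi-mode equation
\[
-h_n^2\Delta u_n-u_n+h_n\nabla q_n=\tilde f_n,
\]
with $\tilde f_n:=\Pi\bigl(-h_n^2 f_{2,n}-h_n^2 af_{1,n}-ih_n f_{1,n}-ih_n au_n\bigr)\in H$.

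A direct calculation gives $\Re(\mathcal{A}U,U)_{\mathcal{H}}=-\|a^{1/2}v\|_{L^2}^2$, so taking real parts of $((\mathcal{A}-i\tau_n)U_n,U_n)_{\mathcal{H}}=(F_n,U_n)_{\mathcal{H}}$ produces $\|a^{1/2}v_n\|_{L^2}^2\leq\varepsilon_n$, whence $\|a^{1/2}u_n\|_{L^2}\leq Ch_n\varepsilon_n^{1/2}=o(h_n)$ via $v_n=i\tau_n u_n+f_{1,n}$ and Poincaré. Estimating each summand of $\tilde f_n$, using in particular $\|au_n\|_{L^2}\leq\|a^{1/2}\|_\infty\|a^{1/2}u_n\|_{L^2}$, we get $\|\tilde f_n\|_{L^2}\leq Ch_n(\varepsilon_n+h_n\varepsilon_n^{1/2})$. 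The hypothesized observability applied to $u_n$ then delivers
\[
\|u_n\|_{L^2}\leq C\|a^{1/2}u_n\|_{L^2}+Ch_n^{-1}\|\tilde f_n\|_{L^2}\longrightarrow 0.
\]

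To close the contradiction we pair the quasi-mode equation with $u_n$: the pressure drops because $u_n$ is divergence free, leaving $h_n^2\|\nabla u_n\|_{L^2}^2=\|u_n\|_{L^2}^2+(\tilde f_n,u_n)_{L^2}$. A complementary pairing of the unscaled equation with $u_n$ yields $\tau_n^2\|u_n\|_{L^2}^2-\|\nabla u_n\|_{L^2}^2=O(\varepsilon_n)$, where the a priori bound $|\tau_n|\|u_n\|_{L^2}\leq\|v_n\|_{L^2}+\|f_{1,n}\|_{L^2}\leq 1+\varepsilon_n$ disposes of the otherwise dangerous term $\tau_n\Im(f_{1,n},u_n)_{L^2}$. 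Combining these two identities with the energy normalization $\|\nabla u_n\|_{L^2}^2+\|v_n\|_{L^2}^2=1$ forces $\|\nabla u_n\|_{L^2},|\tau_n|\|u_n\|_{L^2},\|v_n\|_{L^2}\to 0$, so $\|U_n\|_{\mathcal{H}}\to 0$, contradicting the normalization.

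\emph{Principal obstacle.} The decisive step is the last one: the observability alone delivers only $L^2$-smallness of $u_n$, while the target is $\mathcal{H}$-smallness of $U_n$, and the two norms are separated by a factor of $|\tau_n|$ through $v_n=i\tau_n u_n+f_{1,n}$. The cancellation of this $|\tau_n|$-amplification relies on the interplay between the quasi-mode pairing (where the divergence-free constraint eliminates the harmonic pressure) and the full-equation pairing (where the energy bound on $|\tau_n|\|u_n\|$ absorbs the $\tau_n$-enhanced error). This mirrors the scalar damped-wave argument of the appendix of \cite{BG}, with the new ingredients being the harmonic pressure and the Leray projection dictated by the Stokes structure.
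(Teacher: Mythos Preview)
Your overall architecture is correct and matches the paper: reduce stabilization to a uniform imaginary-axis resolvent bound for $\mathcal{A}$, dispose of bounded $\sigma$ by compactness plus unique continuation (Lemma~\ref{uniquecontinuation}), and treat $|\sigma_n|\to\infty$ by contradiction via the quasi-mode hypothesis. Your derivation of the quasi-mode equation for $u_n$, the dissipation identity $\|a^{1/2}v_n\|_{L^2}^2\le\varepsilon_n$, and the conclusion $\|u_n\|_{L^2}\to 0$ are all sound.

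The gap is in the final upgrade from $\|u_n\|_{L^2}\to 0$ to $\|U_n\|_{\mathcal H}\to 0$. Your ``two pairings'' are in fact the \emph{same} identity: the quasi-mode equation is just $h_n^2$ times the unscaled one, so pairing either with $u_n$ yields the single relation $\|\nabla u_n\|^2-\tau_n^2\|u_n\|^2=O(\varepsilon_n)$. Combining this with $\|v_n\|^2=\tau_n^2\|u_n\|^2+O(\varepsilon_n)$ and the normalization $\|\nabla u_n\|^2+\|v_n\|^2=1$ forces $\|\nabla u_n\|^2\to\tfrac12$ and $\tau_n^2\|u_n\|^2\to\tfrac12$, \emph{not} zero. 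The observability bound you obtained is only $\|u_n\|\le C(h_n\varepsilon_n^{1/2}+\varepsilon_n)$, which gives $\tau_n\|u_n\|\le C(\varepsilon_n^{1/2}+\tau_n\varepsilon_n)$; since nothing prevents $\tau_n\varepsilon_n$ from staying bounded away from zero (we only know $\varepsilon_n\to 0$, not $\varepsilon_n=o(h_n)$), there is no contradiction.

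The paper closes this gap differently: rather than squeezing more out of the equation for $u_n$, it applies an observability estimate to $v_n$. The obstruction is that $v_n$ satisfies $(h_n^2A+1)v_n=o_{H}(h_n)+o_{H^{-1}}(h_n^2)$, with the $H^{-1}$ piece coming from $h_n^2Af_{1,n}$. The paper therefore upgrades \eqref{semi-ob1} to an estimate of the form
\[
\|v\|_{L^2}+\|h\nabla v\|_{L^2}\le C\Bigl(\|a^{1/2}v\|_{L^2}+h^{-1}\|f_1\|_{L^2}+h^{-2}\|f_2\|_{H^{-1}}\Bigr),
\]
proved by a duality argument: one first shows $\|P_h^{-1}\|_{L^2\to L^2}\le C/h$ and $\|P_h^{-1}\|_{L^2\to H^1}\le C/h^2$ for $P_h=h^2A+\mathrm{Id}-ih\Pi a$, then tests $(h^2A+1)v=f_1+f_2$ against $w=P_h^{-1}g$ for arbitrary $g\in H$. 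Applying this to $v_n$ gives $\|v_n\|_{L^2}+\|h_n\nabla v_n\|_{L^2}=o(1)$, and then $\|\nabla u_n\|_{L^2}=\|h_n\nabla(v_n-f_{1,n})\|_{L^2}=o(1)$, which is the missing contradiction. You should replace your final paragraph with this $H^{-1}$ duality step.
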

For the proof, we need two lemmas.
\begin{lemma}
	Let $L$ be a linear operator on Hilbert space $\mathcal{X}$ with a compact resolvent $(L-\mathrm{Id})^{-1}$. Suppose the spectrum $\textrm{ Spec}(L)\subset\{z:\Re z<0\}$ and satisfies that for any $\sigma\in\mathbb{R}$, $L-i\sigma$ is invertible and satisfies the uniform bound
	$$  \sup_{\sigma\in\mathbb{R}}\|(L-i\sigma)^{-1}\|<\infty.
	$$
	Then there exists $\delta>0$, such that
	$$  \sup_{\lambda\in\mathbb{C}^{\delta}}\|(L-\lambda)^{-1}\|<\infty,
	$$
	where $\mathbb{C}^{\sigma}:=\{z\in\mathbb{C}:\Re z>-\sigma\}$ for any $\sigma\in\mathbb{R}$.
\end{lemma}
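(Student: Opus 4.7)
The strategy is a standard Neumann series perturbation argument, combined with the semigroup bound for $L$ to cover the far right half-plane. Set $M := \sup_{\sigma\in\mathbb{R}}\|(L-i\sigma)^{-1}\|$, which is finite by hypothesis.

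The first step is to spread the uniform bound from the imaginary axis into a vertical strip surrounding it. For $\lambda=\tau+i\sigma$ with $\tau\in\mathbb{R}$, I factor
\begin{equation*}
L-\lambda=(L-i\sigma)-\tau=(L-i\sigma)\bigl(I-\tau(L-i\sigma)^{-1}\bigr).
\end{equation*}
As soon as $|\tau|M<1$, Neumann series invert the second factor with norm $\le (1-|\tau|M)^{-1}$, yielding
\begin{equation*}
\|(L-\lambda)^{-1}\|\le \frac{M}{1-|\tau|M}.
\end{equation*}
Fixing any $\delta_0<1/M$ gives a uniform bound on the full strip $\{|\Re\lambda|\le \delta_0\}$.

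The second step is to control the resolvent for $\Re\lambda\ge \delta_0$. Here I invoke the fact that $L$ generates a $C_0$-semigroup (in the application $L=\mathcal{A}$, whose semigroup was constructed earlier, and whose energy-dissipation identity gives $\|e^{tL}\|\le 1$ on $\mathcal{H}$, in particular growth bound $\omega_0\le 0$). Then the Laplace representation
\begin{equation*}
(L-\lambda)^{-1}=-\int_0^{\infty}e^{t(L-\lambda)}\,dt,\qquad \Re\lambda>\omega_0,
\end{equation*}
produces the bound $\|(L-\lambda)^{-1}\|\le C/(\Re\lambda-\omega_0)$, which is uniformly bounded by $C/\delta_0$ on $\{\Re\lambda\ge \delta_0\}$ provided $\omega_0<\delta_0$. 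Taking $\delta=\delta_0$ and combining both estimates gives the required uniform bound on $\mathbb{C}^{\delta}=\{\Re z>-\delta\}$.

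The main technical point is the compatibility between the two regions: the Neumann radius is $1/M$ while the semigroup estimate requires $\Re\lambda>\omega_0$. If $\omega_0$ were larger than $1/M$, one would either have to iterate the Neumann construction outward (and the norms would degrade uncontrollably) or to sharpen the semigroup growth constant. The key observation making the argument work in our setting is that the dissipative structure of $\mathcal{A}$ forces $\omega_0\le 0$, so the semigroup estimate takes over immediately beyond the Neumann strip and no such iteration is needed.
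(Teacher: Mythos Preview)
Your first step (Neumann series off the imaginary axis into a strip $\{|\Re\lambda|\le\delta_0\}$ with $\delta_0<1/M$) is exactly what the paper does.

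For the second step, the paper takes a different route: it does \emph{not} use the semigroup at all. Instead it pulls back the resolvent via the Cayley transform $\varphi(z)=\frac{z-1}{z+1}$, which maps $\{\Re z>0\}$ biholomorphically onto the unit disk, and then applies the Cauchy integral formula to the operator-valued analytic function $\Phi(\zeta)=(L-\psi(\zeta))^{-1}$ with boundary values controlled by the imaginary-axis bound $M$. Since $\varphi(\{\Re z\ge\delta\})$ stays a fixed positive distance from $\partial\mathbb{D}$, the Cauchy integral gives a uniform bound on $\{\Re z\ge\delta\}$. This argument uses only the hypotheses stated in the lemma (compact resolvent, spectrum in the open left half-plane, uniform bound on $i\mathbb{R}$).

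Your argument instead invokes the Laplace representation $(L-\lambda)^{-1}=-\int_0^\infty e^{t(L-\lambda)}\,dt$, which requires that $L$ generate a $C_0$-semigroup with growth bound $\omega_0<\delta_0$. You are explicit that this is an additional input, available for the specific operator $\mathcal{A}$ because of the energy-dissipation inequality. So your proof establishes the conclusion for $\mathcal{A}$ (which is all the paper ultimately needs) but does not prove the lemma as stated for an abstract $L$ satisfying only the listed hypotheses. The trade-off: your route is shorter and more transparent once the contraction semigroup is granted, while the paper's complex-analytic argument is self-contained at the level of the lemma's hypotheses.
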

\begin{proof}
	Write
	$$ \sup_{\sigma\in\mathbb{R}}\|(L-i\sigma)^{-1}\|=C
	$$
	We denote $R(z)=(L-z)^{-1}$ for $z\in\rho(L):=\{z:z\in \mathbb{C}\setminus \textrm{ Spec}(L) \}$. Take $z_0\in \rho(L)$, we write
	$$ L-z=(L-z_0)(\mathrm{Id}+(L-z_0)^{-1}(z_0-z)),
	$$
	and for $|z-z_0|<\frac{1}{\|(L-z_0)^{-1}\|}$, we have
	$$\|R(z)\|\leq \|R(z_0)\|\sum_{n=0}^{\infty}|z-z_0|^n\|(L-z_0)^{-1}\|^n\leq R(z_0).
	$$
	Therefore, for $\lambda$ with $|\Re\lambda|\leq\delta$, where $0<\delta<\frac{1}{2C}$, we have $\|R(\lambda)\|\leq C$. To conclude, we only need show that there exists $C_1>0$, such that
	$$  \sup_{\Re z>\delta}\|(L-z)^{-1}\|\leq C_1.
	$$
	Consider the holomorphic equivalence $\varphi:\mathbb{C}^0\rightarrow \mathbb{D}$,$\psi=\varphi^{-1}$.
	$$ \varphi(z)=\frac{z-1}{z+1},\psi(\zeta)=\frac{1+\zeta}{1-\zeta},
	$$
	where $\mathbb{D}:=\{\zeta:|\zeta|<1\}$ be the unit disk. One easily verifies that the operator-valued function
	$$  \Phi(\zeta)=R(\psi(\zeta)):\mathbb{D}\rightarrow \mathcal{L}(\mathcal{X})
	$$
	is analytic and satisfies the Cauchy integral formula
	$$  \Phi(\zeta_0)=\frac{1}{2\pi i}\oint_{|\zeta|=1}\frac{\Phi(\zeta)}{\zeta-\zeta_0}d\zeta,\forall \zeta_0\in\mathbb{D}.
	$$
	Since $\textrm{dist }(\partial\mathbb{D},\varphi(\mathbb{C}^{-\delta}))\geq \epsilon_0>0$ for some $\epsilon_0$ depends only on $\delta$, we deduce that for any $z\in \mathbb{C}^{-\delta}$,
	$$ \|R(z)\|\leq \left\|\frac{1}{2\pi i}\oint_{|\zeta|=1}\frac{\Phi(\zeta)}{\zeta-\varphi(z)}d\zeta  \right\|\leq \frac{C}{\epsilon_0}.
	$$
\end{proof}

\begin{lemma}\label{uniquecontinuation}[Unique Continuation of Stoke Operator]
	Let $\sigma>0$ and $u\in V$ satisfies that
	$$ Au=\sigma^2 u.
	$$
	Then if $u|_{\omega}\equiv 0$, we must have $u\equiv 0$.
\end{lemma}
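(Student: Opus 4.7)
The plan is to reduce the unique continuation statement for the Stokes eigenvalue problem to two classical unique continuation results: one for harmonic functions and one for scalar solutions of the Helmholtz equation. First, I would recover the pressure from the projected equation. Since $u \in V \cap H^{2}(\Omega)$ and $Au = \sigma^{2} u$ means $\Delta u - \sigma^{2} u$ lies in the $L^{2}$-orthogonal complement of $H$ (with the natural sign convention making $\sigma^{2}$ a Stokes eigenvalue), the de Rham / Helmholtz decomposition gives a unique $p \in H^{1}(\Omega)$ with $\int_{\Omega} p\, dx = 0$ such that
\[
-\Delta u + \nabla p = \sigma^{2} u \quad \text{in } \Omega, \qquad \mathrm{div}\, u = 0, \qquad u|_{\partial\Omega} = 0.
\]

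The second step is to show that the pressure is harmonic. Taking the divergence of the displayed equation and using $\mathrm{div}\, u = 0$ yields $\Delta p = 0$ in $\Omega$. Now the hypothesis $u|_{\omega} \equiv 0$ plugged into the equation gives $\nabla p = \Delta u + \sigma^{2} u = 0$ on $\omega$, so $p$ is constant on the (connected components of the) open set $\omega$. Since $p$ is harmonic on the connected domain $\Omega$, it is real-analytic, and the classical unique continuation for harmonic functions (or, equivalently, analytic continuation) forces $p$ to be this same constant throughout $\Omega$. In particular, $\nabla p \equiv 0$ in $\Omega$.

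With the pressure eliminated, $u$ satisfies the vector Helmholtz equation $-\Delta u = \sigma^{2} u$ in $\Omega$ with $u \equiv 0$ on $\omega$. Applying Aronszajn's unique continuation theorem component by component—valid for $-\Delta - \sigma^{2}$ as a scalar second-order elliptic operator with smooth coefficients and no infinite order of vanishing allowed for nontrivial solutions—yields $u \equiv 0$ on all of $\Omega$, as $\Omega$ is connected.

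I do not anticipate a serious obstacle: the argument is essentially a bookkeeping of classical unique continuation theorems. The only mildly delicate point is the rigorous recovery of the pressure $p \in H^{1}(\Omega)$ from the abstract equation $Au = \sigma^{2} u$, which requires the $H^{2}$-regularity of $u$ together with the standard Helmholtz decomposition $L^{2}(\Omega)^{d} = H \oplus \nabla H^{1}(\Omega)$; this is where the assumption $u \in V \cap H^{2}(\Omega)$ (rather than merely $u \in V$) is used.
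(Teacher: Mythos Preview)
Your argument is correct and follows essentially the same route as the paper: recover the pressure, take the divergence to see that $p$ is harmonic, use $u|_{\omega}=0$ to conclude $p$ is constant on $\omega$ and hence on all of $\Omega$ (the paper phrases this via the strong maximum principle rather than analyticity, but the content is the same), and then invoke unique continuation for the Helmholtz equation. The only cosmetic difference is that you spell out the pressure recovery and cite Aronszajn explicitly, whereas the paper takes both steps for granted.
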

\begin{proof}
	It is equivalent to write
	$$ -\Delta u+\nabla p=\sigma^2u,\textrm{ div }u=0,u\in V,\int_{\Omega}pdx=0.
	$$
	Take divergence of the equation, we have $\Delta p=0$. The vanishing of $u$ in $\omega$ implies that $p$ equals to a constant in a component of $\omega$. Now since $\Omega$ is connected, the maximum principal implies that $p\equiv 0$ in $\Omega$. Therefore we have reduced to unique continuation of eigenfunction of Laplace operator, and this implies that $u\equiv 0$ in $\Omega$. 
\end{proof}

\begin{proof}[Proof of proposition \ref{semi-obser}]
	We need show that the semi-classical observability implies the stabilization.
	
	Note that the operator $(\mathcal{A}-\lambda)$ is invertible for any $\lambda>0$. One write
	$$  \mathcal{A}-z=(\mathrm{Id}+(1-z)(\mathcal{A}-\mathrm{Id})^{-1})(\mathcal{A}-1),\forall z\in \mathbb{C}.
	$$
	Since $\mathrm{Id}+(1-z)(\mathcal{A}-\mathrm{Id})^{-1}$ is Fredholm with index 0, we infer that $\mathcal{A}-z$ is invertible iff it is injective. In light of the previous lemmas and the Proposition \ref{decay}, we only have to prove the fact that
	\begin{equation*}
	\begin{split}
	\exists C>0,\textrm{ such that }&\forall \sigma\in\mathbb{R}, U\in D(\mathcal{A}),F\in V\times H, (\mathcal{A}-i\sigma) U=F \\ &\textrm{ implies }\|U\|_{\mathcal{H}}\leq C\|F\|_{\mathcal{H}}.
	\end{split}
	\end{equation*}
	
	We argue by contradiction. If it is not true then we can find sequences $(\sigma_n),(U_n),$ and $(F_n)$ such that
	$$ (\mathcal{A}-i\sigma_n) U_n=F_n, \|U_n\|_{\mathcal{H}}=1,\|F_n\|_{\mathcal{H}}<\frac{1}{n}.
	$$
	After extracting subsequences we may assume that $\sigma_n\rightarrow \sigma$, and we write
	$$ U_n=(u_n,v_n)^t,F_n=(f_n,g_n)^t.
	$$
	We have several cases to analyse, according to the limit value $\sigma$.
	\begin{enumerate}
		\item $\sigma=0$: In this case, we have $\mathcal{A}U_n=o(1)_{\mathcal{H}}$, which is equivalent to
		$$ v_n=o(1)_{H_0^1},Au_n-\Pi a v_n=o(1)_{L^2},
		$$
		thus $Au_n=o(1)_{L^2}$. Taking inner product with $u_n$ and integrating by part we have
		$$ \int_{\Omega}|\nabla u_n|^2dx=o(1).
		$$
		This contradicts to $\|U_n\|_{\mathcal{H}}=1$.

		\item $0<|\sigma|<\infty$:  In this case we have $\mathcal{A}U_n-i\sigma U_n=o(1)_{\mathcal{H}}$, or equivalently,
		$$ v_n-i\sigma u_n=o(1)_{H_0^1}, Au_n-(i\sigma+\Pi a)v_n=o(1)_{L^2}.
		$$
		Thanks to Poincar\'e inequality, we deduce that
		$$  Au_n-i\sigma(i\sigma+\Pi a)u_n=o(1)_{L^2}.
		$$
		Applying Rellich compact embedding theorem followed by extracting to suitable sub-sequences, we may assume that
		$$ u_n\rightarrow u,\textrm{ in } L^2(\Omega), u_n \rightharpoonup  u, \textrm{ in } V.
		$$
		Taking inner product with $u_n$, we have
		$$ -\int_{\Omega}|\nabla u_n|^2dx=-\sigma \int_{\Omega}|u_n^2|dx +i\sigma\int_{\Omega}a(x)|u_n|^2dx+o(1),
		$$
		which implies that $au\equiv 0 $ in $\Omega$. Thus we can conclude that $u$ is an eigenfunction of Stokes operator $A$ and vanishes in a non trivial open subset of $\Omega$. The unique continuation property for the system
		$$  -\Delta u+\nabla p=\sigma^2 u,\textrm{ div }u=0
		$$
		implies that $u\equiv 0$. As a consequence, we have that $u_n=o(1)_{H_0^1},v_n=o(1)_{L^2}$. This contradicts to the original assumptions.

		\item $ |\sigma|=\infty$: We only study the case $\sigma_n\rightarrow +\infty$ (the other one is obtained by considering $\ov{U_n}$). 
		
		Let $h_n=\sigma_n^{-1}$, and we deduce from the system $\mathcal{A}U_n-i\sigma_n U_n=o(1)_{\mathcal{H}}$:
		$$ h_n^2 Au_n+u_n-ih_n\Pi au_n=h_n^2\Pi af_n+ih_nf_n+h_n^2 g_n=o_{L^2}(h_n) $$
		$$h_n v_n-iu_n=h_nf_n=o(h_n)_{H_0^1},
		$$
		
		$$ h_n^2Av_n+v_n-ih_n\Pi av_n=ih_ng_n-h_n^2Af_n=o_{L^2}(h_n)+o_{H^{-1}}(h_n).
		$$
		Define the operator $P_h=h^2A+\textrm{Id}-ih\Pi a$ on $H$ with domain $H^2(\Omega)\cap V$, we have (dropping the subindex $n$ for the moment)
		$$ (P_hu|u)_{L^2}=\|u\|_{L^2(\Omega)}^2-\|h\nabla u\|_{L^2(\Omega)}^2-ih\|a^{1/2}u\|_{L^2(\Omega)}^2.
		$$
		Taking imaginary part, we have
		$$  \|a^{1/2}u\|_{L^2(\Omega)}^2\leq C\frac{\|P_h u\|_{L^2(\Omega)}\|u\|_{L^2(\Omega)}}{h}.
		$$
		Applying the semi-classical observability to the equation $$h^2Au+u=ih\Pi a u+\tilde{f}$$ with $\tilde{f}=o_{L^2}(h)$, we have
		\begin{equation}
		\begin{split}
		\|u\|_{L^2(\Omega)}^2\leq& C\left(\|a^{1/2}u\|_{L^2(\Omega)}^2+\frac{1}{h^2}(\|f\|_{L^2(\Omega)}^2+h^2\|a^{1/2}u\|_{L^2(\Omega)}^2)\right) \\
		\leq& \frac{C}{h}\|f\|_{L^2(\Omega)}\|u\|_{L^2(\Omega)}+\frac{C}{h^2}\|f\|_{L^2(\Omega)}^2.
		\end{split}
		\end{equation}
		This implies that
		$$ \|u_n\|_{L^2(\Omega)}\leq \frac{C}{h_n}\|f_n\|_{L^2(\Omega)}=o(1).
		$$
		To conclude, observe that $v_n$ satisfies
		$$ h_n^2Av_n+v_n=o_{H}(h_n)+
		o_{H^{-1}(\Omega)}(h_n^2),
		$$
		and we claim that if 
		$(h^2A+1)v=f_1+f_2$, then
		\begin{equation}\label{obser}
		\begin{split}
	 &\|v\|_{L^2(\Omega)}+\|h\nabla v\|_{L^2(\Omega)}\\ \leq
		&C\left(\|a^{1/2}v\|_{L^2}+
		\frac{\|f_1\|_{L^2(\Omega)}}{h}+\frac{\|f_2\|_{H^{-1}(\Omega)}}{h^2}\right).
		\end{split}
		\end{equation}
		Assume the claim for the moment, we thus have $\|h_n\nabla v_n\|_{L^2(\Omega)}=o(1)$, and
		$\|\nabla u_n\|_{L^2(\Omega)}=o(1)$, thanks to $u_n+ih_nv_n=ih_nf_n$. This contradicts to the original assumption.
		
		Now we turn to the proof of the claim. By density, \eqref{semi-ob1} still valid when $v\in V$. 
		Taking inner product of $v$ with $P_h v$, we have
		$$(P_hv|v)_{L^2}=\|v\|_{L^2(\Omega)}^2-\|h\nabla v\|_{L^2(\Omega)}^2-ih\|a^{1/2}v\|_{L^2(\Omega)}^2.
		$$
		Therefore, by taking real part and injecting \eqref{semi-ob1}, we have
		\begin{equation}\label{semi-ob2}
		\|h\nabla v\|_{L^2(\Omega)}+
		\|v\|_{L^2(\Omega)}
		\leq C\left(\|a^{1/2}v\|_{L^2(\Omega)}+\frac{\|P_h v\|_{L^2(\Omega)}}{h}\right).
		\end{equation}
	By taking real and imaginary part of $(P_h v|v)_{L^2}$, we have
		\begin{equation}
		\label{semi-ob3}
		\|a^{1/2}v\|_{L^2(\Omega)}^2
		\leq \frac{\|P_h v\|_{L^2(\Omega)}\|v\|_{L^2(\Omega)}}{h},
		\end{equation}
		 Substituting \eqref{semi-ob3} into 
		\eqref{semi-ob2}, we obtain that
		$$ \|h\nabla v\|_{L^2(\Omega)}^2 +\|v\|_{L^2(\Omega)}^2
		\leq C\left(\frac{\|P_h v\|_{L^2(\Omega)}\|v\|_{L^2(\Omega)}}{h}+\frac{\|P_h v\|_{L^2(\Omega)}^2}{h^2}\right),
		$$
		and this implies that
		$$  \|h\nabla v\|_{L^2(\Omega)} +\|v\|_{L^2(\Omega)}
		\leq C\frac{\|P_h v\|_{L^2(\Omega)}}{h}.
		$$
		Thus $P_h$ is bijective from $H^2(\Omega)\cap V$ to $H$ and hence invertible. From the fact that
		$$ P_h=(1+(2-ih\Pi a)(h^2A-1)^{-1})(h^2A-1),
		$$
		$P_h$ can be written as composition of a positive operator and a Fredholh operator of index 0. From the estimate above, we conclude that
		$$ \|P_h^{-1}\|_{L^2\rightarrow L^2}\leq \frac{C}{h},
		\|P_h^{-1}\|_{L^2\rightarrow H^1}\leq \frac{C}{h^2}.
		$$
		Now come back to the equation $(h^2A+1)v=f_1+f_2$. Taking $g\in H$, and letting $w=P_h^{-1}g$, we have 
		\begin{equation}
		\begin{split}
		(v|g)_{L^2}=&((h^2A+1) v|w)_{L^2}+ih(v|\Pi a w)_{L^2}\\
		=&(f_1+f_2|w)_{L^2}+ih(av|w)_{L^2}\\
		\leq &\|f_1\|_{L^2(\Omega)}\|P_h^{-1}g\|_{L^2()\Omega)}\\+&\|f_2\|_{H^{-1}(\Omega)}\|P_h^{-1}g\|_{V}+h\|av\|_{L^2(\Omega)}\|w\|_{L^2(\Omega)}\\
		\leq &C\left(\|a^{1/2}v\|_{L^2(\Omega)}+
		\frac{\|f_1\|_{L^2(\Omega)}}{h}+\frac{\|f_2\|_{H^{-1}(\Omega)}}{h^2}\right)\|g\|_{L^2(\Omega)}.
		\end{split}
		\end{equation}
		This completes the proof.
	\end{enumerate}

\end{proof}

%%%%%%%%%%%%%%%%%%%%%%%%%%%%%%%%%%%%%%%%%%%
\section{A priori Estimates for the quasi-mode system}
Now we consider the quasi-modes of Stokes system
\begin{equation}\label{Stokes-semi}
\left\{
\begin{aligned}
&-h_k^2\Delta u_k-u_k+h_k\nabla q_k=f_k,(u_k,f_k)\in (H^2(\Omega)\cap V)\times H,\\
&h_k\mathrm{div } u_k=0,\textrm{in} \ \Omega \\
\end{aligned}
\right.
\end{equation}
To simplify the notation, we drop the sub index $k$ and just keep the semi-classical parameter $h$ everywhere. Note that the functions $u,v,$ etc. should be understood as $u(h),v(h)$, etc. We fix the geometric assumption on the domain $\Omega\subset \mathbb{R}^d$ is smooth and connected and $\partial\Omega=\cup_{j=1}^N \Gamma_j$ with $\Gamma_j\cap\Gamma_k=\varnothing,i\neq k$ and each $\Gamma_j$ is smooth and connected.  

Now assume that
$$\|u\|_{L^2(\Omega)}=O(1),\|f\|_{L^2(\Omega)}=o(h).
$$
Taking inner product with $u$ and integrate by part, we have
$$ \|h\nabla u\|_{L^2(\Omega)}=O(1).
$$
One can always assume that $\int_{\Omega}qdx=0$, since $q\in L^2(\Omega)/\mathbb{R}$. From the regularity theory of steady Stokes system, (see \cite{Teman}, page 33), and Poincar\'{e} inequality, we have
$$ \|h^2\nabla^2 u\|_{L^2(\Omega)}=O(1),\|q\|_{L^2(\Omega)}=O(h^{-1}), \|h\nabla q\|_{L^2(\Omega)}=O(1).
$$

We now give some estimates on the trace. Write $q_0=q|_{\partial\Omega}$, 
\begin{lemma}
	$\|q\|_{L^2(\Omega)}=O(h^{-1}),
	\|q_0\|_{H^{1/2}(\partial\Omega)}=O(h^{-1}), \|q_0\|_{L^2(\partial\Omega)}=O(h^{-1}).$
\end{lemma}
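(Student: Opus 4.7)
The plan is a short bookkeeping argument combining the Stokes regularity bounds already derived in the paragraph above with standard trace/embedding theorems; no new PDE estimate is required.

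First, I would note that the interior $L^2$ bound $\|q\|_{L^2(\Omega)}=O(h^{-1})$ is already the statement recorded two lines above from the regularity theory for the stationary Stokes system (applied to the pair $(u,hq)$ solving $-\Delta(hu)+\nabla(hq)=h^{-1}(u+f)$ with divergence-free right-hand side and zero mean for $q$), combined with the Poincar\'e inequality on the zero-mean class $L^2(\Omega)/\mathbb{R}$. So the only genuine new content of the lemma is the two boundary estimates.

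For those, the bound $\|h\nabla q\|_{L^2(\Omega)}=O(1)$, also recorded in the preceding display, gives $\|\nabla q\|_{L^2(\Omega)}=O(h^{-1})$, and combined with $\|q\|_{L^2(\Omega)}=O(h^{-1})$ this yields
\begin{equation*}
\|q\|_{H^1(\Omega)}=O(h^{-1}).
\end{equation*}
Under the smoothness hypothesis on $\partial\Omega$, the trace operator $\gamma_0:H^1(\Omega)\to H^{1/2}(\partial\Omega)$ is bounded, so
\begin{equation*}
\|q_0\|_{H^{1/2}(\partial\Omega)}=\|\gamma_0 q\|_{H^{1/2}(\partial\Omega)}\leq C\|q\|_{H^1(\Omega)}=O(h^{-1}).
\end{equation*}
Finally, since $\partial\Omega$ is a compact smooth manifold, the Sobolev embedding $H^{1/2}(\partial\Omega)\hookrightarrow L^2(\partial\Omega)$ is continuous, giving $\|q_0\|_{L^2(\partial\Omega)}=O(h^{-1})$.

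There is no real obstacle; the proof is a direct repackaging of the Stokes regularity information already present. One remark worth making is that taking the divergence of \eqref{Stokes-semi} and using $\mathrm{div}\, u=0$ together with $f\in H$ (so $\mathrm{div}\, f=0$) yields $\Delta q=0$, i.e.\ $q$ is harmonic; this extra structure is not needed here to obtain the $O(h^{-1})$ bound, but it is precisely the fact invoked later when the authors say ``the sequence of pressure $q$ are harmonic, and their impact on the solution only occurs at the boundary,'' which motivates recording the boundary traces of $q$ with the present scaling.
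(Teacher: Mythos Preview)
Your argument is correct and, in fact, more direct than the paper's. The paper's proof invokes the harmonicity of $q$ to appeal to the trace theorem $H^s(\Omega)\to H^{s-1/2}(\partial\Omega)$ valid for all $s\in\mathbb{R}$ (which for general functions requires $s>1/2$, but for harmonic functions extends to all real $s$), together with interpolation. You instead observe that the preceding display already gives $q\in H^1(\Omega)$ with norm $O(h^{-1})$, so the classical trace theorem at $s=1$ suffices, and no harmonicity or interpolation is needed. Your route is the cheaper one here; the paper's phrasing via harmonic traces is presumably meant to foreshadow the later boundary analysis where the harmonic structure of $q$ becomes essential, as you yourself note in your closing remark.
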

\begin{proof}
	Since $q$ is harmonic function, then one can apply trace theorem  $H^s(\Omega)\rightarrow H^{s-1/2}(\partial\Omega)$ for any $s\in\mathbb{R}$. Hence the conclusions follows from these and interpolations.
\end{proof}

\begin{lemma}
	$h\partial_{\mathbf{\nu}}u|_{\partial\Omega}=(h\partial_{\mathbf{\nu}}u_{\pa},0)$, and
	$\|h\partial_{\mathbf{n}}u|_{\partial\Omega}\|_{L^2(\partial\Omega)}=O(1)$.
\end{lemma}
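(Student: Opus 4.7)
The lemma asserts two things: that the normal component of $\partial_\nu u|_{\partial\Omega}$ vanishes, and that $\|h\partial_\nu u\|_{L^2(\partial\Omega)}=O(1)$. I would treat them in turn.

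\textbf{First assertion.} Working in the geodesic coordinates $(y,x')$ near $\partial\Omega$ introduced in Section~2 and writing $u=(u_\parallel,u_\perp)$, the Dirichlet condition $u|_{\partial\Omega}=0$ forces $u_\parallel(0,x')\equiv 0$ and $u_\perp(0,x')\equiv 0$; in particular all tangential derivatives of these traces vanish. Restricting the divergence-free identity
\[
\mathrm{div}_\parallel u_\parallel+\frac{1}{\sqrt{\det g}}\partial_y\bigl(\sqrt{\det g}\,u_\perp\bigr)=0
\]
to $y=0$, the first summand vanishes (it involves only $x'$-derivatives of $u_\parallel|_{y=0}\equiv 0$), and expanding the second summand by Leibniz together with $u_\perp|_{y=0}=0$ yields $\partial_y u_\perp|_{y=0}=0$. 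This is exactly the assertion $h\partial_\nu u|_{\partial\Omega}=(h\partial_\nu u_\parallel,0)$.

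\textbf{Second assertion.} I would apply a Rellich-type multiplier argument. Choose $X\in C^\infty(\ov\Omega;\R^d)$ with $X|_{\partial\Omega}=\nu$, and take the real part of the $L^2$ pairing of the quasi-mode equation with $\overline{(X\cdot\nabla)u}$:
\[
\mathrm{Re}\int_\Omega\bigl(-h^2\Delta u-u+h\nabla q-f\bigr)\cdot\overline{(X\cdot\nabla) u}\,dx=0.
\]
I would then integrate by parts each of the four pieces. For the Laplacian contribution, the standard Rellich identity combined with the fact that $u|_{\partial\Omega}=0$ forces $|\nabla u|^2=|\partial_\nu u|^2$ on $\partial\Omega$ gives
\[
\mathrm{Re}\int(-h^2\Delta u)\cdot\overline{(X\cdot\nabla) u}\,dx=-\tfrac{1}{2}h^2\int_{\partial\Omega}|\partial_\nu u|^2\,d\sigma+O(h^2\|\nabla u\|_{L^2}^2),
\]
where the remainder is $O(1)$ by the a priori bound $\|h\nabla u\|_{L^2}=O(1)$. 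The $-u$ piece, after one integration by parts along $X$ using $u|_{\partial\Omega}=0$, contributes $O(\|u\|_{L^2}^2)=O(1)$, and the source term contributes $O(\|f\|_{L^2}\|\nabla u\|_{L^2})=o(1)$.

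The delicate piece is the pressure integral. Integrating by parts in the $i$-index (moving $\partial_i$ off $q$, using $\mathrm{div}\,u=0$ to kill one interior term) produces a boundary contribution $\int_{\partial\Omega}hq\,(\partial_\nu\bar u\cdot\nu)\,d\sigma$, which \emph{vanishes} thanks to the first assertion of the lemma, plus the interior term $-\int hq(\partial_i X_j)(\partial_j\bar u_i)\,dx$. A direct Cauchy--Schwarz bound gives only $O(\|hq\|_{L^2}\|\nabla u\|_{L^2})=O(h^{-1})$, which is insufficient: this is the main obstacle. The key idea is to integrate by parts once \emph{more}, now in the $j$-index; since $u|_{\partial\Omega}=0$ no boundary term appears, and one obtains
\[
-\int hq(\partial_i X_j)(\partial_j\bar u_i)\,dx=\int h(\partial_j q)(\partial_i X_j)\bar u_i\,dx+\int hq\,\partial_i(\mathrm{div}\,X)\,\bar u_i\,dx,
\]
which is now $O\bigl(\|h\nabla q\|_{L^2}\|u\|_{L^2}+\|hq\|_{L^2}\|u\|_{L^2}\bigr)=O(1)$ by the a priori estimates. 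Combining the four contributions yields $h^2\int_{\partial\Omega}|\partial_\nu u|^2\,d\sigma=O(1)$, i.e.\ $\|h\partial_\nu u\|_{L^2(\partial\Omega)}=O(1)$. The point is that a single integration by parts forces the use of $\|q\|_{L^2}=O(h^{-1})$ and is too rough; the second integration by parts replaces one power of $q$ by a power of $h\nabla q$, exploiting the sharper estimate $\|h\nabla q\|_{L^2}=O(1)$.
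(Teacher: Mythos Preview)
Your proof is correct and follows the same Rellich multiplier strategy as the paper: choose a vector field equal to $\nu$ on $\partial\Omega$, pair the equation with $(X\cdot\nabla)u$, and integrate by parts; the Laplacian produces the boundary term $\tfrac12\|h\partial_\nu u\|_{L^2(\partial\Omega)}^2$, while the remaining interior pieces are $O(1)$.

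The only noticeable difference is in the bookkeeping for the pressure term. The paper integrates by parts first in the $j$-direction (the direction of $L$), which immediately costs no boundary term since $u|_{\partial\Omega}=0$; what remains is $-h\int u\cdot L(\nabla q) - h\int(\mathrm{div}\,L)\,u\cdot\nabla q$, and writing $L(\nabla q)=[L,\nabla]q+\nabla(Lq)$ together with $\mathrm{div}\,u=0$ reduces everything to $O(\|u\|_{L^2}\|h\nabla q\|_{L^2})=O(1)$ in one further step. Your route integrates by parts in the $i$-direction first, which produces a boundary integral $\int_{\partial\Omega}hq\,(\partial_\nu\bar u\cdot\nu)\,d\sigma$ that you then kill using the first assertion of the lemma, followed by a second integration by parts in $j$. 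Both routes are equivalent two-step manipulations leading to the same $O(1)$ bound; the paper's order has the minor advantage of not invoking the first assertion in the proof of the second, but this is purely cosmetic.
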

\begin{proof}
The first assertion follows from $h\mathrm{div }u=0$ and Dirichlet boundary condition, while we apply a multiplier method to prove the second. From the geometric assumption on $\Omega$, we can find a vector field $L\in C^1(\ov{\Omega})$ such that $L|_{\partial\Omega}=\nu$(see \cite{bookcontrol}, page 36). In global coordinate system, we write $ \displaystyle{L=L_j(x)\partial_{x_j}}.$ By using the equation, we have
\begin{equation*}
\begin{split}
\int_{\Omega}Lu\cdot fdx=&\int_{\Omega} Lu\cdot (-h^2\Delta u-u+h\nabla q)dx\\
-\int_{\Omega}Lu\cdot udx
=&-\int_{\Omega}L_j(x)\partial_{x_j} u^iu^idx\\
=&-\int_{\Omega}
\partial_{x_j}\left(L_j(x)u^i\right)u^idx+\int_{\Omega}\textrm{div }L(x)|u|^2dx\\
=&\int_{\Omega}L_j(x)u^i(x)\partial_{x_j}u^idx+
\int_{\Omega}\textrm{div }L(x)|u|^2dx\\
=&\int_{\Omega}Lu\cdot udx+
\int_{\Omega}\textrm{div }L(x)|u|^2dx,
\end{split}
\end{equation*}
thus
\begin{equation*}
\begin{split}
h\int_{\Omega}Lu\cdot \nabla qdx
&=-h\int_{\Omega}u^i \partial_{x_j}\left(L_j\partial_{x_i} q\right)dx\\
&=-h\int_{\Omega}u\cdot L(\nabla q)dx-h\int_{\Omega}
(\textrm{div }L(x))u\cdot\nabla qdx\\
&=-h\int_{\Omega}u\cdot[L,\nabla]qdx-h\int_{\Omega}\textrm{div }L(x)u\cdot\nabla qdx\\
&=O(1),
\end{split}
\end{equation*}
and
$\displaystyle{\int_{\Omega} Lu\cdot udx=-\frac{1}{2}\int_{\Omega}
	\textrm{div }L(x)|u|^2dx=O(1),}$
\begin{equation*}
\begin{split}
-h^2\int_{\Omega}Lu^i\Delta u^idx=&-h^2\int_{\partial\Omega} \left|\partial_{\nu}u^i\right|^2d\sigma +h^2\int_{\Omega}\nabla L(\nabla u^i,\nabla u^i)dx \\
&+h^2\int_{\Omega}
L_j(x)\partial^2_{x_jx_k} u^i\partial_{x_k} u^i\\
=&-h^2\int_{\partial\Omega} \left|\partial_{\nu}u^i\right|^2d\sigma +h^2\int_{\Omega}\nabla L(x)(\nabla u^i,\nabla u^i)dx
\\&+h^2\int_{\Omega}\partial_{x_j}\left(L_j\partial_{x_k}u^i\right)\partial_{x_k}u^idx
-h^2\int_{\Omega}\textrm{div }L(x)\nabla u^i\cdot\nabla u^i(x)dx,
\end{split}
\end{equation*}
\begin{equation*}
\begin{split}	h^2\int_{\Omega}\partial_{x_j}\left(L_j\partial_{x_k}u^i\right)\partial_{x_k}u^idx&=
h^2\int_{\partial\Omega}L\cdot\nu \left|\partial_{\nu} u^i\right|^2d\sigma -h^2\int_{\Omega}L_j(x)\partial_{x_k} u^i
\partial^2_{x_jx_k} u^idx,
\end{split}
\end{equation*}
$$-h^2\int_{\Omega}Lu^i\Delta u^idx
=-\frac{h^2}{2}\int_{\partial\Omega}\left|\partial_{\nu}u^i\right|^2d\sigma+
\int_{\Omega}\nabla L(x)(h\nabla u^i,h\nabla u^i)dx-
\frac{h^2}{2}\int_{\Omega}\textrm{div }L(x)|\nabla u^i|^2dx.
$$
Observing that
$ \int_{\Omega}Lu\cdot fdx=o(1),
$
we have
$$ \int_{\partial\Omega}\left|h\partial_{\nu}u\right|^2d\sigma=O(1).
$$
\end{proof}

\begin{lemma}\label{press.norm}
	Under additional assumption that
	$$\|a^{1/2} u_k\|_{L^2(\Omega)}=o(1),
	$$
	after extracting to subsequences, we have $h_k\nabla q_k\rightharpoonup 0$ $L^2(\Omega)$ and $u_k\rightharpoonup 0$ weakly in $L^2(\Omega)$. Therefore from Rellich theorem, we have $hq\rightarrow 0$, strongly in $L^2(\Omega)$.
\end{lemma}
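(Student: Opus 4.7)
The plan is to pass to distributional limits using weak compactness, exploiting the fact that the pressures are harmonic so that the damping information can propagate from the open set $\{a>0\}$ to all of $\Omega$.

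First I extract subsequences. Since $\|u_k\|_{L^2(\Omega)}=O(1)$, I may assume $u_k\rightharpoonup u_\infty$ weakly in $L^2$. The normalization $\int_\Omega q_k\,dx=0$ combined with Poincar\'e gives $\|h_kq_k\|_{L^2}\leq C\|h_k\nabla q_k\|_{L^2}=O(1)$, so $(h_kq_k)$ is bounded in $H^1(\Omega)$; passing to a further subsequence and using Rellich, I obtain $h_kq_k\rightharpoonup p$ in $H^1$, $h_k\nabla q_k\rightharpoonup\nabla p$ in $L^2$, and $h_kq_k\to p$ strongly in $L^2$. Moreover, taking the divergence of the equation and using $\mathrm{div}\,u_k=0$ together with $\mathrm{div}\,f_k=0$ (since $f_k\in H$) yields $\Delta q_k=0$ in $\Omega$, so each $h_kq_k$ is harmonic; hence $p$ is harmonic as well.

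Next I pass to the limit in the equation rewritten as $u_k=h_k\nabla q_k-h_k^2\Delta u_k-f_k$. The apriori bound $\|h_k\nabla u_k\|_{L^2}=O(1)$ implies $h_k^2\Delta u_k=h_k\cdot(h_k\Delta u_k)\to 0$ in $H^{-1}(\Omega)$; since $f_k\to 0$ in $L^2$ and $h_k\nabla q_k\rightharpoonup\nabla p$ in $L^2$, the distributional identity $u_\infty=\nabla p$ follows. I then invoke the damping: the hypothesis $\|a^{1/2}u_k\|_{L^2}=o(1)$ together with weak convergence gives $a^{1/2}u_\infty=0$, so $u_\infty=0$ a.e.\ on the set $\{a>0\}$, which is non-empty and open because $a\in C(\overline{\Omega})$ and $\int_\Omega a>0$. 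The identity $u_\infty=\nabla p$ then forces $\nabla p\equiv 0$ on $\{a>0\}$; combined with harmonicity of $p$, unique continuation and connectedness of $\Omega$ yield $p\equiv\mathrm{const}$ on $\Omega$, and the mean-zero normalization forces $p\equiv 0$. Hence $u_\infty=0$ and $h_k\nabla q_k\rightharpoonup 0$ in $L^2$, while the strong convergence $h_kq_k\to 0$ in $L^2$ is just Rellich applied to $(h_kq_k)$.

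The main obstacle is justifying the passage to the distributional limit in the term $h^2\Delta u$; this relies on the apriori $H^1$-type bound $\|h\nabla u\|_{L^2}=O(1)$ established just above from the energy identity. Conceptually, the heart of the lemma is that harmonicity of the pressures allows the purely local damping information $a^{1/2}u_\infty=0$ to be promoted to a global statement via the identity principle for harmonic functions.
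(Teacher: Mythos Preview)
Your argument is correct and follows essentially the same route as the paper: extract weak limits, use harmonicity of the pressures, pass to the limit in the equation to relate $u_\infty$ and $\nabla p$, then use the damping hypothesis together with unique continuation for harmonic functions to force $p\equiv 0$ and $u_\infty=0$. The only organizational difference is that the paper introduces the auxiliary sequence $w_k=h_k^2u_k$, passes to a weak $H^2$ limit $W$, and works with the limiting identity $-\Delta W-U+\nabla P=0$ (eventually noting $W=0$ since $\|h_k^2u_k\|_{L^2}=o(1)$), whereas you bypass this by observing directly that $h_k^2\Delta u_k\to 0$ in $H^{-1}$ from the bound $\|h_k\nabla u_k\|_{L^2}=O(1)$; your shortcut is a bit cleaner and lands immediately on $u_\infty=\nabla p$.
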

\begin{proof}
	We may assume that $h\nabla q\rightharpoonup r$, weakly in $L^2(\Omega)$, and Rellich theorem implies that $hq\rightarrow P$, strongly in $L^2(\Omega)$, and thus $\nabla P=r$, with the property $\int_{\Omega} P=0$.  Now we claim that $\Delta P=0$ in $\Omega$.
	
	Indeed, take any $\varphi\in C_0^{\infty}(\Omega)$,
	$$\int_{\Omega}\nabla P\cdot\nabla\varphi =\lim_{h\rightarrow 0}\int_{\Omega}h\nabla q\cdot\nabla\varphi=0.
	$$
	Now suppose $u_k\rightarrow U$, weakly in $L^2(\Omega)$, $w_k=h_k^2u_k\rightarrow W$, weakly in $H^2(\Omega)$, by taking the weak limit in the equation, we have
	$$ -\Delta W-U+\nabla P=0,\textrm{ in }L^2(\Omega).
	$$
	Notice that $a^{1/2}u_k\rightarrow 0,a^{1/2}w_k\rightarrow 0$, strongly in $L^2(\Omega)$, and this implies that
	$U|_{\omega}=W|_{\omega}=0$. Therefore, in a connect component $\omega'$ of $\omega$, we have
	$\nabla P\equiv 0$. However, $P$ is a harmonic function, then $P\equiv $const., thanks to the fact that $\Omega$ is connected. Note that $\int_{\Omega}P=0$, hence $P\equiv 0$. Moreover, from Rellich theorem that $w_k\rightarrow W$ strongly in $L^2(\Omega)$, and on the other hand $\|h_k^2u_k\|_{L^2(\Omega)}=o(1)$ we must have $W=0$. Therefore $U=0$.
\end{proof}

%%%%%%%%%%%%%%%%%%%%%%%%%%%%%%%%%%%%%%%%%%%%%%%%%%%%%%%%%%%%%%%%%%%%%%%%%%%%%%%%%%%%%%%%%%%%%%%%%%%%%%%%%%%%%%%%%%%%%%%%%%%%%%%%%%%%%%%%%%%%%%%%%%%%

\section{Proof of the Observability Estimates}

In this part, we will prove the Proposition \ref{semi-obser} under the assumption in Theorem \ref{main} on $\Omega$ and $\omega$.

We argue by contradiction, suppose \eqref{semi-ob} is not true, we can then choose a sequence $(u_n,h_n,q_n,f_n)\in H^2(\Omega)\cap V\times \mathbb{R}_+\times H^1(\Omega)\times H$ satisfies equation 
\begin{equation}\label{obserequation}
-h_n^2\Delta u_n-u_n+h_n\nabla q_n=f_n
\end{equation}
 with the following properties:
$$ \|u_n\|_{L^2(\Omega)}=1,\|f_n\|_{L^2(\Omega)}=o(h_n), \|a^{1/2}u_n\|_{L^2(\Omega)}=o(1),n\rightarrow\infty.
$$
Up to extracting to subsequence, we can associate $(u_n,h_n)$ with a semi-classical defect measure $\mu$. Therefore we have $\omega\cap \pi ($supp$(\mu))=\emptyset$ from Corollary \ref{vanishing}, where we denote $\pi: T^*\ov{\Omega}\rightarrow \ov{\Omega}$ be the canonical projection.

Denote $\phi(s,\rho)$ be the globally defined generalized bicharacteristic flow, thanks to the geometric assumption that $\Omega$ has no infinite contact. Pick any point $\rho_0$ with $\pi(\rho_0)\in\omega$.For any time segment $[0,s_0]$, there are several situations: 

Either $\phi([0,s_0],\rho_0)\subset \Omega$, or there exist $\pi(\phi([0,s_0],\rho_0))\cap \partial\Omega\neq \emptyset,$ then from the assumption on $\Omega$, all points $\phi(s,\rho_0)$ with $s\in[0,s_0]$ and $\pi(\phi(s,\rho_0))\in \partial\Omega$ must lie in $\mathcal{H}\cup \mathcal{G}^{2,+}\cup \mathcal{G}^{2,-}\cup \bigcup_{k\geq 3}\mathcal{G}^k$. Now Theorem \ref{St} implies that
$$ \textrm{ supp }(\phi(s,\cdot)_{*}\mu)\subset \textrm{ supp }(\mu).
$$
Therefore, we have
$$ \phi([0,s_0],\rho_0)\cap \textrm{ supp }(\mu)=\emptyset.
$$

We now invoke the geometric control condition to deduce that
$$ \ov{\Omega}\subset\pi\left(\bigcup_{\rho_0\in\omega} \phi([0,T_0],\rho_0)
\right)$$
for some $T_0>0$ and thus $\mu=0$. This contradicts to the assumption that
$$ \int_{\Omega}|u_n(x)|^2dx=1.
$$
%%%%%%%%%%%%%%%%%%%%%%%%%%%%%%%%%%%%%%%%%%%%%%%%%%%%%%%%%%%%%%%%%%%%%%%%%%%%%%%%%

%%%%%%%%%%%%%%%%%%%%%%%%%%%%%%%%%%%%%%%%%%%%%%%%%%%%%%%%%%%%%%%%%%%%%%%%%%%%%%%%%%%%%%%%%%%%%%%%%%%%%%%%%%%%%%%%%%%%%%%%%%%%%%%%%%%%%%%%%%%%%%%%%%%%%%%%%%%%%%%%%%%%%%%%%%%%%%%%%%%%%%%%%%%%%%%%%%%%%%%%%%%%%%%%%%%%%%%%%%%%%%%%%%%%

%%%%%%%%%%%%%%%%%%%%%%%%%%%%%%%%%%%%%%%%%%%%%%%%%%%%%%%%%%%%%%%%%%%%%%%%%%%%%%%%%%
\section{Appendix}
We will derive the hyperbolic Stokes system \eqref{undampedWS} from certain limit procedure of Lam\'e system from elastic theory:
\begin{equation}
\left\{
\begin{aligned}
&\partial_t^2w-\mu\Delta w-(\lambda+\mu)\nabla\textrm{div} w=0, (t,x)\in[0,T]\times \Omega\\
&w(t,.)|_{\partial\Omega}=0 \\
&(w(0),\partial_tw(0))=(w_0,z_0)\in (H_0^1(\Omega)\times L^2(\Omega))^d
\end{aligned}
\right.
\end{equation}
where the solution $w(t,x)$ is vector-valued. 

Define 
$ u(t,x):=w(t/\sqrt{\mu},x)$, then we find that
$$ \partial_t^2 u-\Delta u-\frac{\lambda+\mu}{\mu}\nabla\textrm{div} u=0.
$$
We let $\epsilon=\frac{\mu}{\mu+\lambda}\ll 1$, in the case that $\lambda\gg \mu>0.$ Thus we obtain a family of equations
\begin{equation}
\left\{
\begin{aligned}
&\partial_t^2u_{\epsilon}-\Delta u_{\epsilon}+\nabla p_{\epsilon}=0, (t,x)\in[0,T]\times \Omega\\
&u_{\epsilon}(t,.)|_{\partial\Omega}=0 \\
&(u_{\epsilon}(0),\partial_tu_{\epsilon}(0))=(u_{0,\epsilon},v_{0,\epsilon})\in (H_0^1(\Omega)\times L^2(\Omega))^d
\end{aligned}
\right.
\end{equation}
where $p_{\epsilon}=-\frac{1}{\epsilon}\textrm{div}u_{\epsilon}$ and satisfies $\int_{\Omega}p_{\epsilon}dx=0$.

We make further assumption on the family of initial data $(u_{0,\epsilon},v_{0,\epsilon})$ so that 
$$ \|(u_{0,\epsilon},v_{0,\epsilon})-(u_0,v_0)\|_{H^1\times L^2}\leq C\epsilon
$$
for some divergence free data $(u_0,v_0)\in V\times H$. In particular, we have
$$ \|\mathrm{div }u_{0,\epsilon}\|_{L^2(\Omega)}\leq C\epsilon.
$$

From the well-posedness of Lame system, we have that $u_{\epsilon}\in C([0,T];H_0^1(\Omega)),\partial_t u_{\epsilon}\in
C([0,T];L^2(\Omega))$, and  $p_{\epsilon}\in C([0,T];L^2(\Omega))$. Moreover, we have the conservation of energy
$$ E[u_{\epsilon}]=\frac{1}{2}\int_{\Omega}\left(|\partial_t u_{\epsilon}|^2+|\nabla u_{\epsilon}|^2+\epsilon |p_{\epsilon}|^2\right)dx
$$
and therefore
$$ E[u_{\epsilon}]=\frac{1}{2}\int_{\Omega}\left(|u_{0,\epsilon}|^2+|v_{0,\epsilon}|^2+\frac{1}{\epsilon}|\mathrm{div }u_{0,\epsilon}|^2\right)dx.
$$
From this, we have, up to some subsequence of $(u_{\epsilon},\partial_t u_{\epsilon})$
\begin{equation}
\begin{split}
& \textrm{div}u_{\epsilon}\rightarrow 0,\textrm{ in } L^{\infty}([0,T];L^2(\Omega)),\\
& u_{\epsilon}*\rightharpoonup u, *\textrm{weakly in }L^{\infty}([0,T];H_0^1(\Omega)),\\
& \partial_tu_{\epsilon}*\rightharpoonup \partial_t u, *\textrm{weakly in }L^{\infty}([0,T];L^2(\Omega)).
\nonumber
\end{split}
\end{equation}
From the uniform bound of $\|\partial_t u_{\epsilon}\|_{L^{\infty}([0,T];L^2(\Omega))}$, apply Ascoli theorem, we have that
(up to some subsequence)
$$ u_{\epsilon}\rightarrow u, \textrm{ in } C([0,T];L^2(\Omega)).
$$
Using the equation, we conclude that $\|\nabla p_{\epsilon}\|_{L^{\infty}([0,T];H^{-1}(\Omega))}$ is uniformly bounded. Combine with the fact $\int_{\Omega}p_{\epsilon}=0$, we have that $\| p_{\epsilon}\|_{L^{\infty}([0,T];L^{2}(\Omega))}$ is uniformly bounded, thus up to some subsequence, we may assume that
$$ p_{\epsilon}*\rightharpoonup  p, *\textrm{weakly in }L^{\infty}([0,T];L^2(\Omega)).
$$
Now it is not difficult to verify that $(u,p)$ is a weak solution to \eqref{WStokesmain}. Moreover, $p$ satisfies the zero mean condition
$$ \int_{\Omega}pdx=0.
$$

%%%%%%%%%%%%%%%%%%%%%%%%%%%%%%%%%%%%%%%%%%%%%%%%%%%%%%%%%%%%%%%%%%%%%%%%%%%%%%%%%%%%%%%%%%%%%%%%%%%%%%%%%%%%%%%%%%%%

\subsection*{Acknowledgement}
This article is a part of PhD thesis of the second author. The authors would like to thank professor Gilles Lebeau, the advisor of the second author, for his encouragement and fruitful suggestions.

\begin{center}

\end{center}

\begin{thebibliography}{99}
%\addcontentsline{toc}{section}{References}

\bibitem{Alinhac} S.~Alinhac, P.~G\'{e}rard, Op\'{e}rateurs pseudo-diff\'{e}rentiels et th\'{e}or\`{e}me de Nash-Moser, EDP Sciences, 1991.



\bibitem{BLR} C.~Bardos, G.~Lebeau, J.~Rauch, Sharp sufficient conditions for the observation, control, and stabilization of waves from the boundary, SIAM J.~Control and Optimization, 30(5), 1024-1065, 1992.



\bibitem{NB1} N.~Burq, Semi-classical estimates for the resolvent in nontrapping geometries, Int. Math. Res. Notices,  (5), 221-241, 2002.

\bibitem{BG} N.~Burq,
P.~G\'erard,
Stabilisation of wave equations on the torus with rough dampings arxiv:1801.00983.




\bibitem{BL} N.~Burq and G.~Lebeau, Mesures de d\'{e}faut de compacit\'{e}, application au syst\`{e}me de Lam\'{e}, Ann. Scient. \'{E}c. Norm. Sup., $4^{e}$ s\'{e}rie, t.34, 817-870, 2001.

\bibitem{Felipe}
F.W.~Chaves-Silva, A hyperbolic system and the cost of the null controllability for the Stokes system, Comp. Appl. Math, 34(3),  1057-1074, 2015.

\bibitem{FL}
F.W.~Chaves-Silva, G.~Lebeau, Spectral inequality and optimal cost of controllability for the Stokes system, Control, Optimisation and Calculus of Variations, 22(4), 1137-1162, 2016.





\bibitem{DRL} B.~Dehman, J.~Le Rousseau, M.~L\'eautaud, Controllability of Two Coupled Wave Equations on a Compact Manifold, Arch. Rational Mech. Anal., 211(1), 113-187, 2014.



\bibitem{PG1} P. ~G\'{e}rard and E.~Leichtnam, Ergodic properties of eigenfunctions for the Dirichlet problem, Duke. Math. Journal, 71(2), 559-607, 1993.

\bibitem{PG2} P.~G\'{e}rard, Microlocal defect measures, Comm. Partial Differential Equations, 10, 1347-1382, 1985.

\bibitem{Ge78} L.~Gearhart, Spectral theory for contraction semigroups on Hilbert space. Trans. Amer.
Math. Soc., 236, 385-394, 1978.


\bibitem{Hor} L.~H\"{o}rmander, Analysis of Linear Differential Operators, Vol. III, Springer.


\bibitem{L1}G.~Lebeau, \'{E}quation des ondes amorties, Boutet de Monvel A.(Eds.), Algebraic and Geometric Methods in Mathematical Physics, Kluwer Academic Dordrecht, 73-109, 1996.

\bibitem{JLions} J.L.~Lions, On some hyperbolic equations with a pressure term, Proceedings of the conference dedicated to Louis Nirenberg, Trento.Pitman Research Notes in Mathematics Series, Vol 269. Longman Scientific and Technical, Harlow, 196-208.

	\bibitem{bookcontrol}
L.A.~Medeiros, M.M.~Miranda, A.T.~Lour\^{e}do, Introduction to Exact Control Theory, Method HUM, eduepb, 2013.



\bibitem{MS} R.B.~Melrose, J.~Sj\"{o}strand, Singularities of boundary value problems I, II, Comm.Pure Appl. Math., 31, 593-617, 1978.


\bibitem{Rudin}
W.~Rudin, Real and Complex Analysis, Third edition, McGraw-Hill, New York, 1987.



\bibitem{Tartar}
L. Tartar, H-measures: a new approach for studying homogenization, oscillations and concentration
effects in partial differential equations, Proc. Roy. Soc. Edinburgh Sect. A(115),  193--230, 1990.


\bibitem{RT}
J. Rauch and M. Taylor, Exponential decay of solutions to hyperbolic equations in bounded domains,
Indiana Univ. Math. J., 24, 79—-86, 1974.


\bibitem{Sun} C.~Sun, Semi-classical propagation of singularity for Stokes system, Communications in Partial Differential Equations, DOI: 10.1080/03605302.2020.1750424, 2020.



\bibitem{Taylor2} M.E.~Taylor, Partial Differential Equations, I, II, Second Edition, Applied Mathematical Sciences, Springer, 2011.


\bibitem{Teman} R.~Teman, Navier Stokes Equations, Theory and Numerical Analysis, North Holland Publishing Company, 1977.

\bibitem{Burq-Zworski} N. Burq and M. Zworski. Geometric control in the presence of a black box. Jour. of the
American Math. Society, 17(2), 443-471, 2004.


\bibitem{Zw} M. Zworski, Semi-classical Analysis, Graduate Studies in Mathematics, Vol.138 . 



\end{thebibliography}
\end{document}